\newcommand{\nc}{\newcommand}
\nc{\les}{\lesssim}
\nc{\nit}{\noindent}
\nc{\nn}{\nonumber}
\nc{\D}{\partial}
\nc{\diff}[2]{\frac{d #1}{d #2}}
\nc{\diffn}[3]{\frac{d^{#3} #1}{d {#2}^{#3}}}
\nc{\pdiff}[2]{\frac{\partial #1}{\partial #2}}
\nc{\pdiffn}[3]{\frac{\partial^{#3} #1}{\partial{#2}^{#3}}}
\nc{\abs}[1] {\lvert #1 \rvert}
\nc{\cAc}{{\cal A}_c}
\nc{\cE}{{\cal E}}
\nc{\cF}{{\cal F}}
\nc{\cP}{{\cal P}}
\nc{\cV}{{\cal V}}
\nc{\cQ}{{\cal Q}}
\nc{\cGin}{{\cal G}_{\rm in}}
\nc{\cGout}{{\cal G}_{\rm out}}
\nc{\cO}{{\cal O}}
\nc{\Lav}{{\cal L}_{\rm av}}
\nc{\cL}{{\cal L}}
\nc{\cB}{{\cal B}}
\nc{\cZ}{{\cal Z}}
\nc{\cR}{{\cal R}}
\nc{\cT}{{\cal T}}
\nc{\cY}{{\cal Y}}
\nc{\cX}{{\cal X}}
\nc{\cXT}{{{\cal X}(T)}}
\nc{\cBT}{{{\cal B}(T)}}
\nc{\vD}{{\vec \mathcal{D}}}
\nc{\efield}{\mathcal{E}}
\nc{\vE}{{\vec \efield}}
\nc{\vB}{{\vec \mathcal{B}}}
\nc{\vH}{{\vec \mathcal{H}}}
\nc{\ty}{{\tilde y}}
\nc{\tu}{{\tilde u}}
\nc{\tV}{{\tilde V}}
\nc{\Pc}{{\bf P_c}}
\nc{\bx}{{\bf x}}
\nc{\bX}{{\bf X}}
\nc{\bXYZ}{{\bf XYZ}}
\nc{\bY}{{\bf Y}}
\nc{\bF}{{\bf F}}
\nc{\bS}{{\bf S}}
\nc{\dV}{{\delta V}}
\nc{\dE}{{\delta E}}
\nc{\TT}{{\Theta}}
\nc{\dPsi}{{\delta\Psi}}
\nc{\order}{{\cal O}}
\nc{\Rout}{R_{\rm out}}
\nc{\eplus}{e_+}
\nc{\eminus}{e_-}
\nc{\epm}{e_\pm}
\nc{\eps}{\varepsilon}
\nc{\vnabla}{{\vec\nabla}}
\nc{\G}{\Gamma}
\nc{\w}{\omega}
\nc{\mh}{h}
\nc{\mg}{g}
\nc{\vphi}{\varphi}
\nc{\tlambda}{\tilde\lambda}
\nc{\be}{\begin{equation}}
\nc{\ee}{\end{equation}}
\nc{\ba}{\begin{eqnarray}}
\nc{\ea}{\end{eqnarray}}
\nc{\g}{\gamma}
\nc{\ol}{\overline}
\newtheorem{theorem}{Theorem}[section]
\newtheorem{lemma}[theorem]{Lemma}
\newtheorem{prop}[theorem]{Proposition}
\newtheorem{corollary}[theorem]{Corollary}
\newtheorem{defin}[theorem]{Definition}
\newtheorem{rmk}[theorem]{Remark}
\nc{\pT}{\partial_T}
\nc{\pz}{\partial_z}
\nc{\pt}{\partial_t}
\nc{\la}{\langle}
\nc{\ra}{\rangle}
\nc{\infint}{\int_{-\infty}^{\infty}}
\nc{\halfwidth}{6.5cm}
\nc{\figwidth}{10cm}
\newcommand{\f}{\frac}
\nc{\nlayers}{L} \nc{\nsectors}{M}
\nc{\indicator}{\mathbf{1}}
\nc{\Rhole}{R_{\rm hole}}
\nc{\Rring}{R_{\rm ring}}
\nc{\neff}{n_{\rm eff}}
\nc{\Frem}{F_{\rm rem}}
\nc{\R}{\mathbb R}
\nc{\Z}{\mathbb Z}
\nc{\DD}{\Delta}
\nc{\cD}{\mathcal D}
\nc{\lnorm}{\left\|}
\nc{\rnorm}{\right\|}
\nc{\rnormp}{\right\|_{\ell^{p,\eps}}}
\nc{\rar}{\rightarrow}
\date{\today}
\begin{document}

\begin{abstract}

Let $H=-\Delta+V$ be a Schr\"odinger operator on $L^2(\R^2)$ with real-valued potential $V$, and let $H_0=-\Delta$.  If $V$ has sufficient pointwise decay, the wave operators
$W_{\pm}=s-\lim_{t\to \pm\infty} e^{itH}e^{-itH_0}$ are
known to be bounded on $L^p(\R^2)$ for all $1< p< \infty$ if zero is not an eigenvalue or resonance.
We show that if there is an s-wave resonance or an eigenvalue only at zero, then the wave operators
are bounded on $L^p(\R^2)$ for $1 < p<\infty$.  This result stands in contrast to results in higher dimensions, where the presence of zero energy obstructions is known to shrink the range of valid exponents $p$.

\end{abstract}

\title[$L^p$ boundedness of wave operators]{\textit{On the $L^p$ boundedness of wave operators for two-dimensional Schr\"odinger operators with  threshold obstructions}}

\author[M.B.~Erdo\smash{\u{g}}an, M.~J. Goldberg, W.~R. Green]{M. Burak Erdo\smash{\u{g}}an, Michael Goldberg and William~R. Green}

\address{Department of Mathematics \\
University of Illinois \\
Urbana, IL 61801, U.S.A.}
\email{berdogan@math.uiuc.edu}
\address{Department of Mathematics \\
University of Cincinnati\\
Cincinnati, OH 45221-0025}
\email{Michael.Goldberg@uc.edu}
\address{Department of Mathematics\\
Rose-Hulman Institute of Technology \\
Terre Haute, IN 47803 U.S.A.}
\email{green@rose-hulman.edu}

\thanks{The first author was partially supported by the NSF grant  DMS-1501041. The second author
was  partially  supported  by  a  grant  from  the  Simons  Foundation (Grant  Number 281057.)  The third is supported by  Simons  Foundation  Grant  511825, and also acknowledges the support of a Rose-Hulman summer professional development grant.}

\maketitle

\section{Introduction}

Let $H=-\Delta+V$ be a Schr\"odinger operator
with a real-valued potential $V$ and
$H_0=-\Delta$.  If 
$|V(x)|\les \la x\ra^{-\beta}$ for some $\beta>2$, then
the spectrum of $H$ is composed of  a finite collection of
non-positive eigenvalues along with the absolutely continuous
spectrum on $[0,\infty)$, \cite{RS1}.
The wave operators are defined by the strong limits on $L^2(\R^n)$
\begin{align}
W_{\pm}f =\lim_{t\to\pm \infty} e^{itH}e^{-itH_0}f.
\end{align}
Such limits are known to exist and be asymptotically
complete  for a wide class of
potentials $V$.  
Furthermore, one has
the identities 
\begin{align}\label{eq:intertwine}
W_\pm^* W_\pm=I, \qquad W_\pm W_\pm^*=P_{ac}(H),
\end{align}
with $P_{ac}(H)$ the projection onto the absolutely continuous
spectral subspace associated with the Schr\"odinger operator $H$.

This work continues a line of inquiry on the $L^p(\R^n)$ and $W^{k,p}(\R^n)$ boundedness of the wave operators.  It is known, see \cite{YajWkp1,YajWkp3,JY2,Yaj,FY,Bec,BS,BS2,DY} that the wave operators are bounded on $L^p(\R^n)$ for $1\leq p\leq \infty$ in dimensions $n\geq 3$ provided that zero energy is regular, with varying assumptions on the decay and smoothness of the potential $V$.

We say that zero energy is regular if there are no zero energy eigenvalues or resonances.
There is a zero energy eigenvalue if there is a
solution to $H\psi =0$ with $\psi\in L^2(\R^n)$, and a resonance if $\psi$ belongs to a function space
(depending on $n \leq 4$) strictly larger than $L^2(\R^n)$. 
In dimension $n=2$, there is a rich structure of threshold obstructions.  If $H\psi=0$ with $\psi \in L^\infty (\R^2)\setminus L^2(\R^2)$ we say there is a zero energy resonance.  If $\psi \in L^\infty(\R^2)$ but $\psi \notin L^p(\R^2)$ for any $p<\infty$, we say that $\psi $ is an {\em s-wave resonance}.  If $\psi \in L^p(\R^2)$ for all $p>2$, we say that $\psi$ is a {\em p-wave resonance}.  We say there is a resonance of the first kind at zero if there is an s-wave resonance, but no p-wave resonance or eigenfunction at zero energy.

In dimensions $n\geq 3$, recent work of Yajima \cite{YajNew,YajNew3} and the second and third authors \cite{GGwaveop,GGwaveop4} show that zero-energy eigenvalues generically shrink the range of $L^p(\R^n)$ boundedness to $1< p<n$ if $n=3,4$ and $1< p<\frac{n}{2}$ if $n\geq 5$, with conditions on vanishing moments of the product of the potential and the zero-energy eigenfunctions allowing one to push the upper range to $1<p<\infty$.  In dimensions $n>4$ or $n=3$, one can obtain the $p=1$ endpoint, \cite{GGwaveop,YajNew3}.

The endpoints of $p=1$ and $p=\infty$ are elusive in lower dimensions.  Weder showed that the wave operators are bounded on $L^p(\R)$ for $1<p<\infty$, and that the endpoint $p=1$ is possible under certain conditions on the Jost solutions, but is weak-type in general \cite{Wed}.  See also \cite{DF}.   In two dimensions, Yajima showed that the  wave operators are bounded for $1<p<\infty$, when zero is regular and $\int_{\R^2} V(x)\, dx\neq 0$, \cite{Yaj2}.  The last hypothesis on $V$ was shown to be unneccessary in \cite{JY2}.
To the best of the
authors' knowledge, there are no results in the literature
when zero is not regular and $n=2$.

The intertwining identity
\begin{align}\label{eqn:intertwine}
	f(H)P_{ac}(H)=W_{\pm}f(-\Delta)W_{\pm}^*,
\end{align}
which is valid for Borel functions $f$, allows one to deduce properties of the perturbed operator $f(H)$ from the
simpler operator $f(-\Delta)$, provided one has
control on mapping properties of the wave operators
$W_\pm$ and $W_\pm^*$.
In two dimensions, boundedness of the 
wave operators on $L^p(\R^2)$ for a given $p\geq 2$ imply the dispersive estimates
\begin{equation} \label{eqn:dispersive}
	\|e^{itH}P_{ac}(H)\|_{L^{p'}\to L^p}\les
	|t|^{-1+\frac{2}{p}}.
\end{equation}
Here $p^\prime$ is the H\"older conjugate defined by
$\frac{1}{p}+\frac{1}{p^\prime}=1$.  

There has been much work on dispersive estimates
for the Schr\"odinger evolution with zero energy obstructions in recent years by  Schlag, Toprak and the authors in various combinations, see \cite{ES2,goldE,EG,EGG,GGodd,GGeven,Top} in which $L^1(\R^n)\to L^\infty(\R^n)$ were studied for all $n>1$.  Estimates in $L^p(\R^n)$ are obtained by interpolating these results with the  $L^2$ conservation law.  These works
have roots in previous work of \cite{JSS}, and also in
\cite{Jen2,Mur} where the dispersive estimates were studied as operators on weighted
$L^2(\R^n)$ spaces.

Our main results are inspired by the dispersive estimates proven in \cite{EG}.
In particular, it was shown that the existence of an s-wave resonance at zero does not destroy the natural $t^{-1}$ dispersive estimate.  It was further shown that in the case of an eigenvalue only, one can attain a $t^{-1}$ dispersive decay at the cost of polynomial spatial weights.  These estimates, along with the intertwining identity \eqref{eqn:intertwine} suggest that the wave operators should be $L^p(\R^2)$ bounded for a non-trivial range of $p$.  Our main result affirms this.

\begin{theorem}\label{thm:main}

	Assume that
	$|V(x)|\les \la x\ra^{-\beta}$.
	\begin{enumerate}[i)]
		\item \label{swave result}
		If there is an s-wave resonance, but no p-wave resonance or eigenvalue at zero, then
		the wave operators extend to bounded operators on $L^p(\R^2)$
		for all $1 < p < \infty$ provided $\beta>6$.

		\item \label{eval result} If  
		there is an eigenvalue at zero but no resonances,
		then the wave operators
		extend to bounded operators on $L^p(\R^2)$ for all
		$1 < p<\infty$ provided $\beta>12$.		
		
	\end{enumerate}
	
\end{theorem}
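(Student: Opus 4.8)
The plan is to start from the stationary representation of the wave operator and split the spectral integral into high- and low-energy parts. Writing $R_0^{\pm}(\lambda^2)=(H_0-\lambda^2\mp i0)^{-1}$ and $R_V^{\pm}(\lambda^2)=(H-\lambda^2\mp i0)^{-1}$, one has
\begin{equation*}
W_- = I-\frac{1}{\pi i}\int_0^\infty R_V^{+}(\lambda^2)\,V\,\big[R_0^{+}(\lambda^2)-R_0^{-}(\lambda^2)\big]\,\lambda\,d\lambda,
\end{equation*}
and an analogous formula for $W_+$. Introducing a smooth cutoff $\chi$ supported near $\lambda=0$, I would decompose $W_{\pm}=W_{\pm}^{\mathrm{low}}+W_{\pm}^{\mathrm{high}}$ according to $\chi$ and $1-\chi$. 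The high-energy piece does not see the threshold obstruction, so it can be handled exactly as in the regular-case analysis of Yajima \cite{Yaj2} and Jensen--Yajima \cite{JY2}, yielding boundedness on $L^p(\R^2)$ for $1<p<\infty$ under the pointwise decay $|V(x)|\les\la x\ra^{-\beta}$.

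The core of the argument is the low-energy piece. Here I would use the factorization $V=v\,U\,v$ with $v=|V|^{1/2}$ and $U=\mathrm{sign}(V)$, together with the symmetric resolvent identity
\begin{equation*}
R_V^{\pm}(\lambda^2)=R_0^{\pm}(\lambda^2)-R_0^{\pm}(\lambda^2)\,v\,M^{\pm}(\lambda)^{-1}\,v\,R_0^{\pm}(\lambda^2),\qquad M^{\pm}(\lambda)=U+v\,R_0^{\pm}(\lambda^2)\,v.
\end{equation*}
The whole $\lambda\to 0$ behaviour is then dictated by the singular expansion of $M^{\pm}(\lambda)^{-1}$, which in two dimensions is organized by powers of $\log\lambda$ rather than powers of $\lambda$. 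I would import the precise asymptotics of $M^{\pm}(\lambda)^{-1}$ from the dispersive analysis of \cite{EG}, obtained there via the Jensen--Nenciu inversion lemma: in the s-wave resonance case the leading singularity is of size $(\log\lambda)^{-1}$ multiplying a finite-rank projection associated to the resonance function, while in the eigenvalue-only case the leading term carries a $\lambda^{-2}$ factor times the projection onto the $v$-transformed eigenspace, followed by logarithmic corrections.

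Substituting these expansions into the low-energy integral produces a finite list of explicit operators --- one per term of the expansion of $M^{\pm}(\lambda)^{-1}$ --- plus a remainder that is more regular in $\lambda$ and is disposed of by oscillatory-integral and integration-by-parts estimates. For each explicit operator I would carry out the $\lambda$-integration using the Hankel-function form of the free resolvent kernel, extracting a Schwartz kernel $K(x,y)$ and showing it is dominated by compositions of the Hilbert and Riesz transforms with fractional integration, all of which are bounded on $L^p(\R^2)$ for every $1<p<\infty$ but fail at $p=1,\infty$. In the eigenvalue case the bare $\lambda^{-2}$ term would give an unbounded contribution; this is defeated by the orthogonality relations forced on the eigenfunction $\psi$, namely $\int_{\R^2}V\psi\,dx=0$ and $\int_{\R^2}x_j V\psi\,dx=0$, whose use in turn demands the extra moment integrability supplied by the stronger hypothesis $\beta>12$.

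The main obstacle I anticipate is exactly the control of these leading singular terms. In dimensions $n\geq 3$ the corresponding terms genuinely shrink the admissible range of $p$, so the two-dimensional phenomenon --- survival of the entire interval $1<p<\infty$ --- must be traced to the interplay between the logarithmic (rather than polynomial) nature of the 2D low-energy expansion and the vanishing-moment structure of the resonance or eigenfunction. Demonstrating that the extracted kernels are of singular-integral type, with bounds that are safe in the open interval but critical at the endpoints, is where the bulk of the technical work lies, and is also what accounts for the exclusion of $p=1$ and $p=\infty$ from the statement.
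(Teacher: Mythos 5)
Your skeleton coincides with the paper's: the stationary representation, the high/low energy split with the high-energy piece delegated to \cite{Yaj2}, the symmetric resolvent identity with $V=vUv$, the import of the $M^\pm(\lambda)^{-1}$ expansions from \cite{EG} (with the logarithmic, resp.\ $\lambda^{-2}$, leading singularities), the use of the vanishing moments of $V\psi$, and an integration-by-parts treatment of the regular error term (the paper's Lemma~\ref{lem:errorbound}). The gap is at the decisive step, the leading singular terms, where your proposal is a placeholder whose stated mechanism cannot work as described. You propose to extract the kernel $K(x,y)$ of each explicit term and show it is ``dominated by compositions of the Hilbert and Riesz transforms with fractional integration.'' But domination of an extracted kernel can only mean a pointwise bound $|K(x,y)|\les |K_0(x,y)|$, and pointwise bounds are useless for transferring singular-integral boundedness: the $L^p$ boundedness of Hilbert/Riesz-type operators rests on cancellation of the kernel, not on its size, and the operator whose kernel is the absolute value of a Calder\'on--Zygmund kernel is unbounded on $L^p$. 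So either you keep the oscillation, in which case you must exhibit an exact singular-integral structure inside each term --- something your sketch does not do, and which is far from evident for the $\log\lambda$- and $\lambda^{-2}$-weighted terms --- or you pass to absolute values, in which case you need kernels whose size alone yields boundedness, which is a different argument from the one you describe.

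The paper takes the second route, and the mechanism is exactly the point your sketch omits: the orthogonality is spent \emph{before} taking absolute values, through an integral form of the Mean Value Theorem. Since $S_1v1=0$, one may replace $H_0^-(\lambda|x-z|)$ by $H_0^-(\lambda|x-z|)-H_0^-(\lambda\la x\ra)=\lambda\int_{\la x\ra}^{|x-z|}(H_0^-)'(\lambda r)\,dr$, and similarly subtract the zeroth (and, in the eigenvalue case, also the first) Taylor term from $J_0(\lambda|y-w|)$; the $\lambda$-integrals of the resulting products of Bessel-function derivatives are then estimated by the oscillatory-integral bounds of Lemmas~\ref{lem:J's}, \ref{lem:J''} and \ref{lem:J'}, producing kernels bounded pointwise by weights of the form $\la r-|x|\ra^{-N}\la s-|y|\ra^{-N}$ which are admissible or controlled by Lemma~\ref{lem:Lp kernel}, hence bounded by size (Schur-type) arguments. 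A consequence you should note: the leading singular terms are in fact bounded on $1\le p\le\infty$ in the s-wave case (Proposition~\ref{prop:swave prop}) and on $1\le p<\infty$ in the eigenvalue case (Proposition~\ref{prop:D3}), so your attribution of the exclusion of $p=1,\infty$ to these terms is inverted. The restriction to $1<p<\infty$ in the theorem comes instead from the terms involving the finite-rank operator $S$ --- the same terms already present when zero is regular --- which carry no projection orthogonal to $v$, admit no such cancellation, and are only known to be bounded on $1<p<\infty$ via \cite{JY2}.
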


The near full range of $L^p(\R^2)$ boundedness is somewhat surprising due to the known results in higher dimensions in which further assumptions on the eigenspace are needed, \cite{GGwaveop,YajNew2,GGwaveop4}.  See also Remark~1.3 in \cite{Yaj2}.  The range for the eigenvalue only case utilizes orthogonality properties between the zero energy eigenfunctions and the potential.  In particular, we have for any zero energy eigenfunction $\psi$ that
$$
	\int_{\R^2} V\psi(x)\, dx=\int_{\R^2} x_j V\psi(x)\, dx=0, \qquad j=1,2.
$$
In higher dimensions, \cite{GGwaveop,YajNew2,GGwaveop4}, the addition of these vanishing moment assumptions are crucial to extending the range of $L^p(\R^n)$ boundedness to $1\leq p<\infty$.

We prove Theorem~\ref{thm:main} for $W=W_-$.
The proof for $W_+$ is identical up to complex conjugation.
The limiting resolvent operators are defined by $R_0^\pm(\lambda^2) := \lim\limits_{\eps \to 0^+} (H_0 - (\lambda \pm i\eps)^2)^{-1}$
and $R_V^\pm(\lambda^2) := \lim\limits_{\eps \to 0^+} (H - (\lambda \pm i\eps)^2)^{-1}$.  We refer to these operators
as the free and perturbed resolvents, respectively.  These operators are well-defined on polynomially
weighted $L^2(\R^2)$ spaces due to the limiting absorption
principle of Agmon, \cite{agmon}.

It is well-known that the free resolvent operator may be expressed in terms of special functions.  In particular, 
\be \label{eqn:R0bessel}
	R_0^\pm(\lambda^2)(x,y)=\pm \frac{i}{4}H_0^\pm (\lambda|x-y|),
\ee
where $H_0^\pm(z)=J_0(z)\pm i Y_0(z)$ are the Hankel functions of order zero, which are composed of the Bessel functions $J_0$ and $Y_0$.

The starting point for our analysis is the so-called ``stationary representation" of the wave operator,
\be \label{eq:statrep}
W^-u=u-\frac{1}{\pi i} \int_0^\infty \lambda R_V^-(\lambda^2) V[R_0^+-R_0^-](\lambda^2) u \, d\lambda
\ee
Due to the results of Yajima, \cite{Yaj2}, the high energy portion of the wave operator, when $\lambda>\lambda_0>0$ for any $\lambda_0\ll1$, is bounded on $L^p(\R^2)$ for $1<p<\infty$.  Accordingly, 
we are interested in the low energy contribution, so we insert the cut-off $\chi(\lambda)$ which is equal to one if $0<\lambda<\lambda_1$ and is zero if $\lambda>2\lambda_1$ for a small fixed constant $\lambda_1\ll 1$.

The paper is organized as follows.  We begin in Section~\ref{sec:res} developing the necessary low energy expansions of the perturbed resolvent operators $R_V^\pm(\lambda^2)$ in the presence of zero-energy obstructions.  In Section~\ref{sec:swave}, we prove that in the case of an s-wave resonance at zero energy, the most singular term may be bounded pointwise by an operator that is bounded on the full range of $1\leq p\leq \infty$.  In Section~\ref{sec:eval}, we show that in the case of an eigenvalue only at zero energy, the leading term is bounded on $1\leq p<\infty$.  In Section~\ref{sec:thmpf} we provide the proof of Theorem~\ref{thm:main}.  Finally, in Section~\ref{sec:ests}, we prove necessary technical integral estimates.

\section{Resolvent Expansions}\label{sec:res}

In this section we recall, and modify as needed, expansions for the low energy resolvent $R_V^\pm(\lambda^2)$ derived in \cite{EG}, see also \cite{JN,Sc2}.  These expansions were developed to study the $L^1(\R^2)\to L^\infty(\R^2)$ estimates, and require some modifications to suit the goal of established boundedness of the wave operators.

To analyze the low energy contribution of the wave operator we employ the symmetric resolvent identity for the perturbed resolvent operator.  We define $U(x)=1$ when $V(x)\geq 0$ and $U(x)=-1$ if $V(x)<0$, $v=|V|^{\f12}$.  Then $V=Uv^2$ and for $\Im(\lambda)\not=0$,
we have
$$
	R_V(\lambda^2)V=R_0(\lambda^2)vM(\lambda)^{-1}v,
$$
where  
$$M(\lambda)=U+vR_0(\lambda^2)v.$$  For $\lambda \in \R \setminus\{0\}$ define
$M^\pm(\lambda) = U + vR_0^\pm(\lambda^2)v$.   The invertibility of $M^\pm(\lambda)$ around $\lambda=0$ is intimately tied to the existence of obstructions (eigenvalues and/or resonances) at the threshold $\lambda=0$, see \cite{JN,EG}.  
So, inserting the resolvent identity into \eqref{eq:statrep}, our goal is to bound the integral kernel of
\begin{align}\label{eq:Minvlow}
	\frac{1}{\pi i} \int_0^\infty R_0^- v M^-(\lambda)^{-1}v[R_0^+-R_0^-](\lambda^2) \lambda \chi(\lambda)\, d\lambda.
\end{align}

We note that expansions for $M^{\pm}(\lambda)^{-1}$ and hence of the resolvent $R_V^\pm(\lambda^2)$ in the presence of zero energy obstructions were a significant achievement in the work of Jensen and Nenciu \cite{JN}.  Further study was performed by the first and third authors, \cite{EG}, in service of establishing dispersive estimates.  From these works it is known that the singular behavior of $M^{\pm}(\lambda)^{-1}$ as $\lambda \to 0$ is highly dependent on the type of obstruction at zero energy.  We recall the following definitions from \cite{EG}, see also \cite{JN,Sc2}.

\begin{defin}
	We say an operator $T:L^2(\R^2)\to L^2(\R^2)$ with kernel
	$T(\cdot,\cdot)$ is absolutely bounded if the operator with kernel
	$|T(\cdot,\cdot)|$ is bounded from $L^2(\R^2)$ to $L^2(\R^2)$.
\end{defin}

It is worth noting that Hilbert-Schmidt and finite-rank operators are absolutely bounded.

We say that an absolutely bounded operator $T(\lambda)(\cdot,\cdot)$ is $\widetilde O_k(\lambda^s)$ if the integral
kernel satisfies 
\begin{align}\label{O def 1}
\big\|\sup_{0<\lambda<2\lambda_1} \lambda^{j-s}|\partial_\lambda^j T(\lambda)(\cdot,\cdot)|\big\|_{L^2\to L^2}\lesssim 1, \qquad 0\leq j \leq k.
\end{align}

Define $P$ to be the projection onto the span of $v$, and
$Q:=\mathbbm{1}-P$.

For small energies near the threshold, the expansion for the perturbed resolvent $R_V^\pm$ is found through expansions for the free resolvents $R_0^\pm$ both directly and through the operators $M^\pm(\lambda)^{-1}$.  In particular, we recall (see Section~3 of \cite{Sc2} for example), that the integral kernel of the free resolvents satisfy
$$
	R_0^\pm(\lambda^2)(x,y)=g^{\pm}(\lambda)+G_0(x,y)+E_0^\pm(\lambda)(x,y)
$$
Here, $E_0$ is an error term, and
\begin{equation} \label{eq:G0}
G_0f(x) = -\frac{1}{2\pi} \int_{\R^2} \log|x-y|f(y)\,dy,
\end{equation}
We note that $G_0$ is the fundamental solution to the Laplace equation on $\R^2$.  This naturally yields an expansion for the operators $M^{\pm}(\lambda)$.  With $T = U + vG_0v$, we have
$$
	M^{\pm}(\lambda)=g^{\pm}(\lambda)P+T+vE_0^\pm(\lambda)v.
$$
The nature of the threshold obstruction dictates the properties one needs from $E_0$.  We develop appropriate expansions in Lemma~\ref{lem:Minv swave} and \ref{M evalonlycor}.

\begin{defin}\label{resondef}\begin{enumerate}
		\item We say zero is a regular point of the spectrum
		of $H = -\Delta+ V$ provided $ QTQ=Q(U + vG_0v)Q$ is invertible on $QL^2(\mathbb R^2)$.
		
		\item Assume that zero is not a regular point of the spectrum. Let $S_1$ be the Riesz projection
		onto the kernel of $QTQ$ as an operator on $QL^2(\mathbb R^2)$.
		Then $QTQ+S_1$ is invertible on $QL^2(\mathbb R^2)$.  Accordingly, we define $D_0=(QTQ+S_1)^{-1}$ as an operator
		on $QL^2(\R^2)$.
		We say there is a resonance of the first kind at zero if the operator $T_1:= S_1TPTS_1$ is invertible on
		$S_1L^2(\mathbb R^2)$.  In this case, we define $D_1:=T_1^{-1}$ as an operator on $S_1L^2(\R^2)$.

	\end{enumerate}
	
\end{defin}

The projection $S_1$ is finite rank.  Further details on this inversion process and the related spectral analysis may be found in \cite{JN} and \cite[Section~5]{EG}.  We note further that the operator $QD_0Q$ is absolutely bounded.

If there is a resonance of the first kind at zero, the fact that the rank-one operator $T_1$ is invertible on
$S_1L^2(\R^2)$ immediately implies that $S_1$ is a rank-one projection and $D_1$ acts on a one-dimensional subspace of
$L^2(\R^2)$.  It is shown in~\cite{JN} that a resonance of the first kind occurs precisely when there is an s-wave
resonance at zero but no eigenfunctions or p-wave resonances at zero.  The bounded solution of $H\psi = 0$ can
be recovered directly from the image of $S_1$.

The following functions arise naturally in the expansion of the resolvent operators:
$$g^{\pm}(\lambda)=a\ln \lambda+z  \qquad a\in\R\backslash\{0\}, \quad  z\in\mathbb C\backslash\R.$$
These arise from the small argument expansion of the Bessel functions,  \cite{AS,Sc2,EG}.  We employ the notation $a-:=a-\epsilon$ for an arbitrarily small, but fixed $\epsilon>0$.  Similarly, $a+:=a+\epsilon$.

\begin{lemma}\label{lem:Minv swave}
	
	Assuming $|V(x)|\les \la x\ra^{-5-}$, and there is an s-wave resonance only at zero, then for a sufficiently small $\lambda_1>0$ and $0<\lambda<\lambda_1$, we have
	\begin{align}
		M^\pm(\lambda)&=g^\pm(\lambda)P+T+\widetilde O_2 (\lambda^{\f32+}),\\
		(M^\pm (\lambda)+S_1)^{-1}&=h_{\pm}(\lambda)^{-1}S+QD_0Q+\widetilde O_2 (\lambda^{\f32+}),\\
		M^\pm (\lambda)^{-1}&=-h_\pm (\lambda) S_1D_1S_1-SS_1D_1S_1-S_1D_1S_1S\\
		&\quad-h_\pm(\lambda)^{-1}SS_1D_1S_1S+h_{\pm}(\lambda)^{-1}S+QD_0Q+\widetilde O_2 (\lambda^{\f32+}).\nn
	\end{align}
	Here  $h_\pm (\lambda)=g^\pm(\lambda)+b$ for some $b\in \R\backslash \{0\}$.  Both $D_1$ and $S$ are finite-rank absolutely bounded operators on $L^2 (\R^2)$, with ${\rm Rank} (D_1) = 1$ and ${\rm Rank} (S) \leq 2$.
 
\end{lemma}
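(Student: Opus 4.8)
The plan is to establish the three displayed identities in sequence, treating the first as the genuine analytic input and the latter two as successive algebraic inversions built on it. The first identity is the core. I would start from the small-argument expansion of the Hankel function in \eqref{eqn:R0bessel}, which produces the splitting $R_0^\pm(\lambda^2)(x,y)=g^\pm(\lambda)+G_0(x,y)+E_0^\pm(\lambda)(x,y)$ with $g^\pm(\lambda)=a\ln\lambda+z$ and $G_0$ the logarithmic kernel \eqref{eq:G0}. Sandwiching by $v$ sends the scalar $g^\pm(\lambda)$ to the rank-one operator $g^\pm(\lambda)P$ (with $P$ the projection onto the span of $v$, suitably normalized), sends $G_0$ to $vG_0v$ so that $U+vG_0v=T$, and leaves $vE_0^\pm(\lambda)v$ as the remainder. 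The work is to verify $vE_0^\pm(\lambda)v=\widetilde O_2(\lambda^{3/2+})$: I must bound the weighted Hilbert--Schmidt norm of $E_0^\pm(\lambda)$ and of its first two $\lambda$-derivatives against the weights $\lambda^{j-3/2-}$, $j=0,1,2$. This is exactly where the decay hypothesis enters; $|V(x)|\les\langle x\rangle^{-5-}$, i.e. $v(x)\les\langle x\rangle^{-5/2-}$, is what lets one integrate the Bessel remainder (which at this order carries H\"older growth like $|x-y|^{3/2+}$) against $v\otimes v$ and its $\lambda$-derivatives to obtain a finite $\widetilde O_2$-class bound. These estimates parallel those of \cite{EG}, but must be upgraded from the dispersive setting to control two $\lambda$-derivatives. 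I expect this derivative-tracking remainder bound to be the main obstacle.

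With the first identity available, the second is a Feshbach/Schur inversion relative to $L^2(\R^2)=PL^2\oplus QL^2$. Writing $N:=M^\pm(\lambda)+S_1=g^\pm(\lambda)P+(T+S_1)+\widetilde O_2(\lambda^{3/2+})$ and using $PS_1=S_1P=0$, the $QQ$-block is $QTQ+S_1+\widetilde O_2$, invertible for small $\lambda_1$ by a Neumann series with inverse $QD_0Q+\widetilde O_2(\lambda^{3/2+})$. The Schur complement in the $P$-direction is $a_P(\lambda)=g^\pm(\lambda)P+PTP-PTQD_0QTP+\widetilde O_2$; since $P$ is rank one this equals $h_\pm(\lambda)P+\widetilde O_2$ with $h_\pm(\lambda)=g^\pm(\lambda)+b$, where $b$ is the scalar arising from $PTP-PTQD_0QTP$ (real, and nonzero as part of the s-wave resonance classification). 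Because $g^\pm(\lambda)$ is logarithmically large, $a_P(\lambda)^{-1}=h_\pm(\lambda)^{-1}P+\widetilde O_2$. Reassembling the $2\times2$ block inverse and grouping by powers of $h_\pm(\lambda)^{-1}$ then yields the order-one term $QD_0Q$ and the order-$h_\pm^{-1}$ term $S:=(\mathbbm 1-QD_0QT)P(\mathbbm 1-TQD_0Q)$, a finite-rank absolutely bounded operator assembled from the rank-one projection $P$, which is the content of the second identity.

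Finally, I remove the auxiliary projection $S_1$ via the Jensen--Nenciu inversion lemma: $M^\pm(\lambda)^{-1}=N^{-1}+N^{-1}S_1B^{-1}S_1N^{-1}$ with $B=S_1-S_1N^{-1}S_1$, valid once $B$ is invertible on $S_1L^2$. Two identities drive the computation: since $S_1$ projects onto $\ker(QTQ)$ we have $QTQ\,S_1=0$, hence $D_0S_1=S_1=S_1D_0$ and $QD_0Q\,S_1=S_1$, which collapses the $QD_0Q$ part of $N^{-1}$ when flanked by $S_1$; and a short computation using $PS_1=0$ gives $S_1SS_1=S_1TPTS_1=T_1$. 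Feeding the second identity into $B$ produces $B=-h_\pm(\lambda)^{-1}T_1+\widetilde O_2$, so on the rank-one space $S_1L^2$ one has $B^{-1}=-h_\pm(\lambda)S_1D_1S_1+\widetilde O_2$ with $D_1=T_1^{-1}$ (the factor $h_\pm^2$ produced in inverting is absorbed by the loss in $\lambda^{3/2+}$). Substituting $N^{-1}S_1=h_\pm^{-1}SS_1+S_1+\widetilde O_2$ together with its adjoint and multiplying out reproduces exactly the five explicit terms of the third identity, the $QD_0Q$ term, and a $\widetilde O_2(\lambda^{3/2+})$ remainder; the rank assertions are immediate, since $D_1$ has rank one and $S$ has rank at most two, being built from the rank-one operators $S_1$ and $P$. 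The only bookkeeping point throughout these two inversions is that the class $\widetilde O_2(\lambda^{3/2+})$ is closed under composition with the fixed finite-rank operators $P,Q,S_1,D_0,D_1$ and under multiplication by $h_\pm(\lambda)^{\pm1}$, because any fixed power of $|\ln\lambda|$ is absorbed into the loss in the exponent. Granting this, steps two and three are purely algebraic and the analytic difficulty is concentrated entirely in the first step.
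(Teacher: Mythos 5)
Your proposal is correct and follows essentially the same route as the paper: the paper's own proof simply defers to \cite{EG} (Lemma~2.2 adjusted to track a second $\lambda$-derivative of the error term, then Lemma~2.5, Proposition~2.6 and Corollary~2.7 for the Feshbach-type and Jensen--Nenciu inversions), which is precisely the machinery you reconstruct, and your algebra for $B=-h_\pm(\lambda)^{-1}T_1+\widetilde O_2$ and the assembled inverse reproduces the stated expansion exactly. You also correctly isolate the only genuinely new analytic point as the two-derivative control of the resolvent error under $|V(x)|\les \la x\ra^{-5-}$, which the paper settles via the large-argument Bessel bound $|\partial_\lambda^2 R_0^\pm(\lambda^2)(x,y)\widetilde \chi(\lambda |x-y|)|\les |x-y|^{3/2}\lambda^{-1/2}$.
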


\begin{proof}
	
	The first bound is a simple adjustment of Lemma~2.2 in \cite{EG} taking $k=\f32+$ for the first derivative terms.  Since we need two derivatives of the error term, this requires the additional decay assumed on the potential.  The proof follows exactly as in \cite{EG} until one gets to the second derivative of the error term $M_0(\lambda)$.  In this case, recalling \eqref{eqn:R0bessel} and the expansion of the Bessel functions about infinity, we have
	$$
		|\partial_\lambda^2 R_0^\pm(\lambda^2)(x,y)\widetilde \chi(\lambda |x-y|)|\les \frac{|x-y|^{\f32}}{\lambda^{\f12}}.
	$$	
	The expansions of the Bessel functions used in \cite{EG} near zero require no further modifications.
	Once this bound is achieved the remaining bounds follow from the proofs of Lemma~2.5, Proposition~2.6 and Corollary~2.7 in \cite{EG}.
\end{proof}

Thus, in the case of an s-wave only at zero energy, we have the expansion for \eqref{eq:Minvlow}, and may express the low-energy portion of the wave operator as
\begin{multline}\label{eq:Minvlow2}
\frac{1}{\pi i} \int_0^\infty R_0^- v \bigg[-h_\pm (\lambda) S_1D_1S_1-SS_1D_1S_1-S_1D_1S_1S+QD_0Q\\
-h_\pm(\lambda)^{-1}SS_1D_1S_1S+h_{\pm}(\lambda)^{-1}S+\widetilde O_2 (\lambda^{\f32+})
\bigg]v[R_0^+-R_0^-](\lambda^2) \lambda \chi(\lambda)\, d\lambda.
\end{multline}

On the other hand, if there is an eigenvalue only at zero,

\begin{lemma}\label{M evalonlycor}
	
	Assume that there is an eigenvalue but no resonances at zero, and that $|V(x)|\les \la x\ra^{-12-}$.  Then
	\begin{multline}
	M^{\pm}(\lambda)^{-1}=\frac{S_3D_3S_3}{\lambda^2}+ (a_1\log\lambda+b_{1,\pm}) \Xi_1+\Big(1+\frac{b_{3,\pm}}{a_2\log\lambda+b_{2,\pm}}\Big)\Xi_2 \\
	+\frac{1}{h^{\pm}(\lambda)}\Xi_3+(M^{\pm}(\lambda)+S_1)^{-1}+ \widetilde O_2(\lambda^{1+}).
	\label{evalyonly Minv}
	\end{multline}
	Here, $S_3D_3S_3$ is a finite-rank operator, $\Xi_i$ are real-valued absolutely bounded operators, $\Xi_2$ and $\Xi_3$ have a projection orthogonal to
	$P$ on at least one side, and $\Xi_1$ have orthogonal projections on both sides. Further $a_i\in\R\setminus\{0\}$ and
	$b_{i,+}=\overline{b_{i,-}}$.
	
\end{lemma}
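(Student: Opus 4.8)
The plan is to establish \eqref{evalyonly Minv} as the zero-eigenvalue analogue of Lemma~\ref{lem:Minv swave}, reproving the eigenvalue-case resolvent expansion of \cite{EG} in the stronger $\widetilde O_2$ sense demanded by \eqref{O def 1}, i.e.\ as an identity between absolutely bounded operators whose remainders are controlled through two $\lambda$-derivatives. I would begin, as in Lemma~\ref{lem:Minv swave}, from $M^\pm(\lambda)=g^\pm(\lambda)P+T+vE_0^\pm(\lambda)v$, but now carry the small-argument expansion of the free resolvent $R_0^\pm(\lambda^2)$ down through order $\lambda^2$ and $\lambda^2\log\lambda$, since a zero-energy eigenvalue forces a $\lambda^{-2}$ singularity in $M^\pm(\lambda)^{-1}$ and the inversion must therefore resolve the $\lambda^2$ coefficient of $M^\pm$. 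Producing these extra terms together with a remainder $vE_0^\pm(\lambda)v$ that is $\widetilde O_2$ of a positive power of $\lambda$ requires many vanishing moments of the Bessel tails, and this is exactly what consumes the hypothesis $|V(x)|\les\la x\ra^{-12-}$, just as the $3/2+$ estimate in Lemma~\ref{lem:Minv swave} needed only $\la x\ra^{-5-}$.

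The inversion itself proceeds by the iterated Feshbach/Schur reduction of Jensen and Nenciu \cite{JN}. Since the resonance-free data have already been packaged into $(M^\pm(\lambda)+S_1)^{-1}$ by Lemma~\ref{lem:Minv swave}, I would record that operator as one of the terms of \eqref{evalyonly Minv} and compute only the correction coming from the kernel of $QTQ$. Writing $M^\pm=(M^\pm+S_1)-S_1$ and applying the Jensen--Nenciu identity gives $M^{\pm}(\lambda)^{-1}=(M^\pm+S_1)^{-1}+(M^\pm+S_1)^{-1}S_1\,B_1(\lambda)^{-1}\,S_1(M^\pm+S_1)^{-1}$, where $B_1(\lambda)=S_1-S_1(M^\pm(\lambda)+S_1)^{-1}S_1$ acts on the finite-dimensional space $S_1L^2(\R^2)$. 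In the eigenvalue-only case the operator $T_1=S_1TPTS_1$ of Definition~\ref{resondef} is degenerate---this is precisely what separates an eigenvalue from the resonance of the first kind---so the inversion of $B_1(\lambda)$ cannot stop here; I would descend through the nested kernel projections $S_3\le S_2\le S_1\le Q$ of the hierarchy and invert at the deepest, eigenvalue-related level. There the operator to be inverted has leading behaviour of order $\lambda^2$, so its inverse contributes the leading singular term $S_3D_3S_3/\lambda^2$, with $D_3$ the inverse of the $\lambda^2$-coefficient, while the intermediate logarithmic scalars $g^\pm(\lambda)$ and $h^\pm(\lambda)=g^\pm(\lambda)+b$ reassemble into the $(a_1\log\lambda+b_{1,\pm})\Xi_1$, the $\Big(1+\frac{b_{3,\pm}}{a_2\log\lambda+b_{2,\pm}}\Big)\Xi_2$, and the $h^\pm(\lambda)^{-1}\Xi_3$ contributions.

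The structural assertions of the lemma would then be read off from this correction term. The scalar reality $b_{i,+}=\overline{b_{i,-}}$ and $a_i\in\R\setminus\{0\}$, together with the real-valuedness of $\Xi_i$, $S_3$ and $D_3$, follow from $R_0^-(\lambda^2)=\overline{R_0^+(\lambda^2)}$ and the reality of $T$, $G_0$, $U$ and $v$, which give $M^-(\lambda)=\overline{M^+(\lambda)}$ and hence $M^-(\lambda)^{-1}=\overline{M^+(\lambda)^{-1}}$. The placement of the projections is dictated by the sandwiching factors: every term of the correction is flanked by $S_1$, and the deeper contributions are flanked by $S_3\le Q$, so that $\Xi_2$ and $\Xi_3$ inherit a projection orthogonal to $P$ on at least one side and $\Xi_1$ on both. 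Finally, the clean $\lambda^{-2}$ leading term (rather than $\lambda^{-2}\log\lambda$) reflects the orthogonality built into the eigenspace: using the standard correspondence between the range of $S_3$ and the zero-energy eigenfunctions established in \cite{JN,EG}, the moment conditions $\int_{\R^2}V\psi\,dx=\int_{\R^2}x_jV\psi\,dx=0$ force the $\lambda^2\log\lambda$ part of the deepest-level operator to vanish, leaving a nondegenerate $\lambda^2$ coefficient to invert.

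I expect the principal difficulty to be organizational rather than a single hard estimate. One must propagate the products of $g^\pm(\lambda)$, $h^\pm(\lambda)$, $\lambda^2\log\lambda$ and $\lambda^2$ through two nested Schur complements and collect them into exactly the five displayed terms plus a $\widetilde O_2(\lambda^{1+})$ remainder, verifying at each stage that the leftover operators are absolutely bounded and that two $\lambda$-derivatives preserve the claimed powers. This last point is the genuinely new feature relative to \cite{EG}, where only the $L^1(\R^2)\to L^\infty(\R^2)$ dispersive bound was sought and correspondingly fewer derivatives of the resolvent were tracked. In particular, differentiating the scalar logarithmic factors $h^\pm(\lambda)^{-1}$ and $(a_2\log\lambda+b_{2,\pm})^{-1}$, and the singular factor $\lambda^{-2}$, must not degrade any remainder below order $\lambda^{1+}$; checking this uniformly over all the error terms generated by the reduction, under the decay $\la x\ra^{-12-}$, is where the argument is most delicate.
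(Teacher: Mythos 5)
Your overall architecture---the Jensen--Nenciu/Feshbach identity $M^{\pm}(\lambda)^{-1}=(M^\pm+S_1)^{-1}+(M^\pm+S_1)^{-1}S_1B_1(\lambda)^{-1}S_1(M^\pm+S_1)^{-1}$, the descent through the nested projections (with $S_1=S_2=S_3$ in the eigenvalue-only case), the use of the moment conditions $S_3v1=S_3vx_j=0$ to kill the $\lambda^2\log\lambda$ coefficient and leave a clean $\lambda^{-2}$ singularity, and conjugation symmetry for the reality claims---is exactly the architecture of \cite[Section~6]{EG}, which is what the paper actually invokes: its proof consists of citing \cite[Corollary~6.2]{EG} and supplying a single new ingredient, the refined input expansion \eqref{M0further}. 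The genuine gap in your proposal sits precisely at that one new ingredient. You stop the free-resolvent expansion at the $\lambda^2$ and $\lambda^2\log\lambda$ terms, arguing that the inversion only needs to ``resolve the $\lambda^2$ coefficient.'' That is too shallow by a full order. In the nested inversion the remainder in $M_0^\pm$ is amplified by $\lambda^{-4}$: one writes $B_1(\lambda)=\lambda^{2}\bigl[S_3vG_2vS_3+\lambda^{-2}(\text{corrections})\bigr]$, inverts the bracket by a Neumann series, and multiplies by $\lambda^{-2}$ again, so an input remainder $\widetilde O_2(\lambda^{k})$ exits as $\widetilde O_2(\lambda^{k-4})$. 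This is why the paper expands through the $\lambda^4$ and $\lambda^4\log\lambda$ terms ($g_2^\pm vG_3v$ and $g_3vG_4v$, i.e.\ Bessel functions to order $z^6\log z$) with remainder $\widetilde O_2(\lambda^{5+})$, landing exactly on the stated $\widetilde O_2(\lambda^{1+})$. At your stopping point the remainder is at best $\widetilde O_2(\lambda^{4-})$, which exits the inversion as $\widetilde O_2(\lambda^{0-})$: this misses the claimed error class entirely, it could not be fed into Lemma~\ref{lem:errorbound} (which needs $\ell>1$) in the proof of Theorem~\ref{thm:main}(ii), and the structured terms $(a_1\log\lambda+b_{1,\pm})\Xi_1$ and part of $\Xi_2$---which are generated by precisely those fourth-order terms (and quadratic compositions of second-order terms) under the $\lambda^{-4}$ amplification---would be buried inside it, so the displayed form of \eqref{evalyonly Minv} could not even be identified.

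The same miscount shows in your accounting of the hypothesis: an expansion stopped at the $\lambda^2$-order kernels needs only roughly $|V(x)|\les\la x\ra^{-8-}$, not $\la x\ra^{-12-}$. The decay $\beta>12$ is consumed exclusively by the $\widetilde O_2(\lambda^{5+})$ remainder, whose kernel carries the spatial weight $|x-y|^{5+}$ and must stay Hilbert--Schmidt after multiplication by $v$ on both sides, forcing $v(x)\les\la x\ra^{-6-}$; this is the pairing of $\lambda$-smallness against spatial growth stated after \eqref{M0further}. If you repair the expansion depth and the matching weight bookkeeping, the rest of your outline does reproduce the route behind the lemma, though note that the paper itself does not re-run the inversion algebra: it proves only \eqref{M0further} and defers the Feshbach bookkeeping to \cite[Corollary~6.2]{EG}, in the same way that Lemma~\ref{lem:Minv swave} defers to the corresponding results of \cite{EG,JN}.
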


This expansion, with a slightly different error term, is found in \cite[Corollary~6.2]{EG} to ensure that the error terms is amenable to the full range of $p$.  This requires only slightly more decay on the potential.  Define $g_2^\pm(\lambda)=\lambda^4(a_2\log\lambda+b_{2,\pm})$ and $g_3(\lambda)=a_3\lambda^4$
with $a_2, a_3\in \R\setminus\{0\}$ and $b_{2,-}=\overline{b_{2,+}}$. Also let
$G_j$ be integral operators with the kernel $|x-y|^{j+1}$ for $j=1,3$, and if $j=2,4$, $G_j$ has kernel is
$|x-y|^{j}\log |x-y|$.
\be\label{M0further}
M_0^{\pm}(\lambda)=g_1^{\pm}(\lambda)vG_1v+  \lambda^2 vG_2v
+g_2^{\pm}(\lambda)vG_3v+g_3(\lambda) vG_4v+\widetilde O_2(\lambda^{5+}),
\ee
by expanding the Bessel functions to order $z^{6}\log z$ and estimating the error term as
in Lemma~2.2 in \cite{EG}. This extra $\lambda$ smallness in the error term is paired with a spatial growth of size $v(x)|x-y|^{5+}v(y)$, which necessitates that $|V(x)|\les \la x\ra^{-12-}$ to be Hilbert-Schmidt.

We note that the only the most singular $\lambda^{-2}$ term is new in this case.  The $\Xi_1$ term is entirely analogous to the `s-wave' term $S_1D_1S_1$ in the expansion in Lemma~\ref{lem:Minv swave} with respect to the spectral parameter $\lambda$ and the orthogonality properties that we use.  The operator with the $\lambda^{-2}$ term is similar to the eigenvalue term encountered in higher dimensions \cite{GGwaveop,GGwaveop4} and has orthogonality properties that we exploit to prove an expanded range of $L^p(\R^2)$ boundedness.

We show that many of the terms that arise in this expansion have an integral kernel that is admissible.  We say that an operator $K$ with integral kernel $K(x,y)$ is admissible if
$$
\sup_{x\in \R^n} \int_{\R^n } |K(x,y)|\, dy+\sup_{y\in \R^n} \int_{\R^n } |K(x,y)|\, dx<\infty.
$$
It is well-known that an operator with an admissible kernel is bounded on $L^p(\R^n)$ for all $1\leq p\leq \infty$.  In addition, we use the following lemma whose proof is in Section~\ref{sec:ests}.

\begin{lemma}\label{lem:Lp kernel}
	
	If $K$ is an integral operator whose kernel satisfies the pointwise estimate
	$$
		|K(x,y)|\les \frac{1}{\la x\ra \la |x|-|y|\ra^2} \qquad \text{or} \qquad \frac{1}{\la x\ra^{1-\epsilon} \la |x|-|y|\ra \la |x|+|y|\ra}
	$$
	for any $0<\epsilon<1$, 
	then $K$ is bounded on $L^p(\R^2)$ for all $1\leq p<\infty$.
	
\end{lemma}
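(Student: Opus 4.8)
The plan is to prove boundedness on $L^p(\R^2)$ for $1<p<\infty$ by a \emph{weighted Schur test} with a radial power weight, and to handle the endpoint $p=1$ separately. I would use the generalized Schur test in the following form: for $1<p<\infty$, if there exist positive weights $\phi,\psi$ and constants $A,B$ with
\[
\int_{\R^2}|K(x,y)|\,\phi(y)^{p'}\,dy\les A\,\psi(x)^{p'}, \qquad \int_{\R^2}|K(x,y)|\,\psi(x)^{p}\,dx\les B\,\phi(y)^{p},
\]
then $K$ is bounded on $L^p$ (the bound follows from H\"older's inequality in the first integral followed by Fubini). I would take the radial weights $\phi(x)=\psi(x)=|x|^{-\g}$, where $\g=\g(p)$ is chosen in the open window $0<\g<\f{2}{\max(p,p')}$, equivalently $0<\g p<2$ and $0<\g p'<2$. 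This window is nonempty for every $1<p<\infty$ and shrinks to the empty set as $p\to\infty$, which is exactly consistent with the failure of the $p=\infty$ endpoint noted below.

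With these weights both Schur conditions reduce, after passing to polar coordinates, to one-dimensional radial integrals. For the first kernel the condition $\int|K(x,y)|\,|y|^{-\g p'}\,dy\les |x|^{-\g p'}$ becomes, with $r=|x|$,
\[
\f{1}{\la r\ra}\int_0^\infty \f{s^{1-\g p'}}{\la r-s\ra^2}\,ds\les r^{-\g p'},
\]
and the dual condition is the analogous integral in $r$. I would verify these by splitting the radial integral into the regions $s<\f r2$, $\f r2<s<2r$, and $s>2r$; the dominant contribution comes from $s\approx r$ and produces $\la r\ra^{-1}r^{1-\g p'}\approx r^{-\g p'}$, while the near-origin and tail regions are of lower order. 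The constraints $\g p'<2$ and $\g>0$ are precisely what guarantee convergence at $s=0$ and $s=\infty$ respectively, and the symmetric condition forces $\g p<2$; all of these hold by the choice of $\g$.

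For the endpoint $p=1$ the weighted test degenerates, so I would instead verify directly that $\sup_{y}\int_{\R^2}|K(x,y)|\,dx<\infty$, which gives $L^1\to L^1$ boundedness. This is the $\g=0$ specialization of the dual Schur condition, and the region decomposition above shows these column sums are finite for both kernels (for the second kernel they even tend to $0$ as $|y|\to\infty$, using $\eps<1$). I would also remark that the row sums $\sup_x\int_{\R^2}|K(x,y)|\,dy$ are in fact infinite for both kernels, so the kernels are \emph{not} admissible and the range genuinely cannot be extended to $p=\infty$; this explains why the statement stops at $p<\infty$.

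The main obstacle is the second kernel. Here the singular factor $\la|x|-|y|\ra^{-1}$ is only first order, so the region $\f r2<s<2r$ contributes an extra logarithm and the radial integral behaves like $r^{-\g p'}\log r$ rather than $r^{-\g p'}$. The point is that this logarithm is absorbed by the genuinely stronger spatial decay carried by the extra factors $\la x\ra^{-(1-\eps)}$ and $\la|x|+|y|\ra^{-1}$, since $r^{-(1-\eps)}\log r\to 0$. Making this absorption quantitative and \emph{uniform} in the two radii, so that the Schur constants $A,B$ are finite for every admissible $\g$ and for both kernels, is the technical heart of the argument and is precisely the content deferred to Section~\ref{sec:ests}.
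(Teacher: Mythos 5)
Your proposal is correct, but it takes a genuinely different route from the paper's proof. The paper argues by decomposing $\R^2\times\R^2$ into the regions $|y|<2|x|$ and $|y|>2|x|$: on the first region it shows both kernels are actually \emph{admissible} (finite row and column sums, via polar coordinates and the observation that $\la |x|-|y|\ra\approx\la x\ra$ when $|y|<\f12|x|$), hence bounded there for all $1\leq p\leq\infty$; on the second region the kernels factor pointwise as $k_1(x)k_2(y)\les \la x\ra^{-1+\epsilon}\la y\ra^{-2}$, so H\"older gives boundedness at $p=1$ (where $|K|\les\la x\ra^{-3+\epsilon}\in L^1_x$ uniformly in $y$) and for all $p>\f{2}{1-\epsilon}$ (since $k_1\in L^p$, $k_2\in L^{p'}$), and interpolation fills in the full range $1\leq p<\infty$. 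You instead run a weighted Schur test with power weights $|x|^{-\g}$, $0<\g<\f{2}{\max(p,p')}$, which handles every $1<p<\infty$ in one stroke with no interpolation, plus a direct column-sum bound at $p=1$; I checked your radial-region computations (including the logarithm from the diagonal region for the second kernel being absorbed by $\la x\ra^{-(1-\epsilon)}\la |x|+|y|\ra^{-1}$, and the convergence constraints $\g p<2$, $\g p'<2$, $\g>0$) and they all go through. What each approach buys: the paper's argument is more elementary—nothing beyond H\"older and interpolation—and localizes the obstruction to $p=\infty$ in the off-diagonal region $|y|>2|x|$; yours is uniform in $p$, makes the failure at $p=\infty$ transparent through the closing window for $\g$, and your side remark that the row sums $\sup_x\int|K(x,y)|\,dy$ diverge (so the majorant kernels are genuinely not admissible) is accurate and consistent with the paper's structure, which claims admissibility only on $|y|<2|x|$. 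One cosmetic point: your deferral of the "quantitative and uniform absorption" to a later section is unnecessary—the three-region radial estimates you sketch already constitute the complete verification, as the computations above confirm.
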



\section{The s-wave only case}\label{sec:swave}

In this section our goal is to show the $L^p(\R^2)$ boundedness of the most singular term that arises in the expansion \eqref{eq:Minvlow2}.  In contrast to the treatment of the analogous terms in the dispersive estimate treatment, \cite{EG}, we employ an integral formulation of the Mean Value Theorem which utilizes the orthogonality porperties of the leading operator.  This allows us to gain faster pointwise decay of the integral kernel, which leads to the extended range of $L^p$-boundedness.   To that end, we prove

\begin{prop}\label{prop:swave prop}
	
	The operator defined by
	\begin{align}\label{eq:wave op}
	Au=
	\frac{1}{\pi i} \int_0^\infty h^-(\lambda) R_0^-(\lambda^2)v S_1D_1S_1v\big[R_0^+(\lambda^2)-R_0^-(\lambda^2) \big] \lambda \chi(\lambda) u \, d\lambda
	\end{align}
	may be extended to a bounded operator on $L^p(\R^2)$ for any $1\leq p\leq \infty$, provided that $v(x)\les \la x\ra^{-3-}$.
	
\end{prop}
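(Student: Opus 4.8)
The plan is to compute the integral kernel $A(x,y)$ explicitly and to show it is \emph{admissible}, which by the remark preceding Lemma~\ref{lem:Lp kernel} yields boundedness on $L^p(\R^2)$ for the full range $1\le p\le\infty$. Since there is an s-wave resonance of the first kind, $S_1$ is a rank-one projection and $D_1$ acts on the one-dimensional space $S_1L^2(\R^2)$; hence $S_1D_1S_1$ is a rank-one self-adjoint operator with kernel $c\,\psi(u)\overline{\psi(w)}$ for some real $c$ and some $\psi\in S_1L^2(\R^2)$. Writing $\Phi:=v\psi$ and using the identity $[R_0^+-R_0^-](\lambda^2)(w,y)=\tfrac{i}{2}J_0(\lambda|w-y|)$ coming from \eqref{eqn:R0bessel}, the kernel factors as
\be
A(x,y)=\frac{c}{\pi i}\int_0^\infty h^-(\lambda)\,\mathcal{F}(\lambda,x)\,\mathcal{G}(\lambda,y)\,\lambda\chi(\lambda)\,d\lambda,
\ee
where $\mathcal{F}(\lambda,x)=\int_{\R^2}R_0^-(\lambda^2)(x,u)\Phi(u)\,du$ and $\mathcal{G}(\lambda,y)=\tfrac{i}{2}\int_{\R^2}\overline{\Phi(w)}\,J_0(\lambda|w-y|)\,dw$. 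The decisive structural input is that $\psi\in QL^2(\R^2)$, so $\psi\perp v$ and therefore $\int_{\R^2}\Phi=\int_{\R^2}\overline{\Phi}=0$; moreover the hypothesis $v\lesssim\la x\ra^{-3-}$ gives $\int_{\R^2}|u|\,|\Phi(u)|\,du<\infty$, so that we may subtract the first moment on each side.

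Next I would convert these mean-zero cancellations into pointwise spatial decay through an integral form of the Mean Value Theorem. On the $\mathcal{G}$-side the Bessel function $J_0$ is entire, so for the reference point $w=0$ I write
\begin{align}
\mathcal{G}(\lambda,y)&=\frac{i}{2}\int_{\R^2}\overline{\Phi(w)}\big[J_0(\lambda|w-y|)-J_0(\lambda|y|)\big]\,dw\nn\\
&=-\frac{i\lambda}{2}\int_{\R^2}\overline{\Phi(w)}\int_0^1 J_1(\lambda|sw-y|)\,\frac{(sw-y)\cdot w}{|sw-y|}\,ds\,dw,
\end{align}
which gains a factor of $\lambda$ together with a factor $|w|$ that is absorbed by the decay of $\Phi$, and supplies decay in $|y|$ via $|J_1(z)|\lesssim\min(z,z^{-1/2})$. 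On the $\mathcal{F}$-side I use the expansion $R_0^-(\lambda^2)(x,u)=g^-(\lambda)+G_0(x,u)+E_0^-(\lambda)(x,u)$: the constant $g^-(\lambda)$ is annihilated by $\int\Phi=0$, the logarithmic part is treated by the further subtraction $\int[\log|x-u|-\log|x|]\Phi(u)\,du$, whose integrand is $\lesssim|u|/\la x\ra$ for $|u|\lesssim|x|$ and again absorbed by $\Phi$, and the error $E_0^-$ is handled by its pointwise bounds. This produces decay of $\mathcal{F}$ in $|x|$.

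Finally I would carry out the $\lambda$-integration. Because of the $\lambda$ already present, the extra $\lambda$ gained on the $\mathcal{G}$-side, and the logarithmic weight $h^-(\lambda)\sim a\log\lambda$, the integral over $\lambda\in(0,2\lambda_1)$ converges at the origin. I would split the region of integration according to the sizes of $\lambda|x|$ and $\lambda|y|$: where an argument is $\lesssim 1$ I use the small-argument Taylor expansions of the Bessel and Hankel functions, and where it is $\gtrsim 1$ I use their oscillatory asymptotics and integrate by parts in $\lambda$ to trade oscillation for decay. Collecting the contributions should yield a pointwise bound on $|A(x,y)|$ by an envelope that is integrable in $y$ and in $x$ (of the type appearing in Lemma~\ref{lem:Lp kernel}, but with the stronger decay forced here by the two-sided orthogonality), from which admissibility, and hence boundedness for $1\le p\le\infty$, follows.

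I expect the main obstacle to be the oscillatory regime of the $\lambda$-integral, where the Hankel and Bessel factors decay only like $(\lambda r)^{-1/2}$ and oscillate at the two scales $|x|$ and $|y|$ simultaneously. Controlling this two-scale oscillatory integral, namely integrating by parts in $\lambda$ while tracking the logarithmic weight and the boundary terms arising at $\lambda\sim|x|^{-1}$ and $\lambda\sim|y|^{-1}$, and ensuring that the resulting envelope is integrable in \emph{both} variables so as to give admissibility rather than only $1\le p<\infty$, is the delicate part; the orthogonality subtractions above are precisely what render these oscillatory contributions summable.
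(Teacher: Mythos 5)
Your structural ideas match the paper's: reduce $S_1D_1S_1$ to a rank-one (finite-rank) kernel, exploit the two-sided vanishing $\int\Phi=0$ (i.e.\ $S_1v1=0$) through an integral form of the Mean Value Theorem to gain powers of $\lambda$ together with spatial reference points, and aim to show the resulting kernel is admissible. But there is a genuine gap at exactly the point you defer to the end: the two-scale oscillatory $\lambda$-integral. The paper's proof runs entirely through Lemma~\ref{lem:J's}, which bounds $\int_0^\infty (H_0^-)'(\lambda r)J_0'(\lambda s)\lambda^{3}h^-(\lambda)\chi(\lambda)\,d\lambda$ by $k(r,s)=\frac{1}{\sqrt{rs}\,\la r-s\ra^{2}}+\frac{1}{r\la r+s\ra^{2+}}$, and the essential feature of this bound is not the decay in $r$ or $s$ separately but its near-diagonal structure, the factor $\la r-s\ra^{-2}$. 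No envelope of product type $k_1(|x|)k_2(|y|)$ with $k_1(|x|)\approx\la x\ra^{-1}$ can be admissible, since $\la x\ra^{-1}\notin L^1(\R^2)$; only the concentration near $|x|\approx|y|$ with quadratic off-diagonal decay makes both the $L^1_x$ and $L^1_y$ integrals uniformly bounded. Your proposal names this as ``the delicate part'' but does not carry it out, and it is the mathematical core of the proposition: in the paper it occupies Section~\ref{sec:ests}, requiring the decomposition of $(H_0^-)'$ and $J_0'$ into oscillatory pieces $e^{\pm i\lambda\cdot}\omega(\lambda\cdot)$ plus compactly supported singular pieces, repeated integration by parts, and the Lipschitz-type bound \eqref{lipbound}.

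A second, related problem is your asymmetric treatment of the two resolvent factors. On the $\mathcal{F}$-side you expand $R_0^-(\lambda^2)(x,u)=g^-(\lambda)+G_0(x,u)+E_0^-(\lambda)(x,u)$ and propose to handle $E_0^-$ ``by its pointwise bounds.'' That expansion is a small-argument expansion: the available bounds on $E_0^-$ grow in $\lambda|x-u|$ and are useful only where $\lambda|x-u|\les 1$, so they cannot be used globally; in the regime $\lambda|x|\gtrsim 1$ you must keep the oscillatory factor $e^{-i\lambda|x-u|}$ intact. This is precisely why the paper never expands the Hankel function at all, but instead subtracts $H_0^-(\lambda\la x\ra)$ and writes the difference as $\lambda\int_{\la x\ra}^{|x-z|}(H_0^-)'(\lambda r)\,dr$, symmetric to the $J_0$ side, so that the $\lambda$-integral becomes the clean product $(H_0^-)'(\lambda r)J_0'(\lambda s)$ handled by a single lemma. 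Moreover, the $G_0$-subtraction you propose yields only $\la x\ra^{-1}$-type decay (with logarithmic losses when $|u|\sim|x|$), which by the product-bound observation above can never close the admissibility argument on its own. So while your plan is the right one in spirit, as written it omits the estimates on which the result actually rests, and the static expansion you chose for the $x$-side would have to be abandoned in the oscillatory regime in any case.
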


Here we take advantage of the fact that the projection operator $  S_1\leq Q$ has the orthogonality property that $S_1v1=0$.  In addition, using \eqref{eqn:R0bessel}, we see that
\be\label{vital}
	R_0^+(\lambda^2)(x,y)-R_0^-(\lambda^2)(x,y)=\frac{i}{2}J_0(\lambda|x-y|).
\ee
This observation is vital as the small argument behavior of $J_0$ is  better than that of $Y_0$ in the spectral parameter $\lambda$.
We need to consider
\begin{equation} \label{eqn:sFirstTerm}
\int_0^\infty \iint_{\R^4} H_0^{-}(\lambda|x-z|)[v S_1D_1S_1v](z,w) J_0(\lambda |w-y|)   h^-(\lambda) \lambda\chi(\lambda)\,dwdz\,d\lambda.
\end{equation}
Using the orthogonality conditions, we can replace $H_0^-(\lambda|x-z|)$ in~\eqref{eqn:EvalueFirstTerm}
with 
\begin{equation} \label{eqn:SubtractingZeroH}
H_0^-(\lambda|x-z|) - H_0^-(\lambda \la x\ra) = \lambda \int_{\la x\ra}^{|x-z|} (H_0^-)'(\lambda r)\,dr,
\end{equation}
and replace $J_0(\lambda|y-w|)$ with
\begin{equation} \label{eqn:SubtractingZeroJs}
J_0(\lambda|y-w|) - J_0(\lambda |y|)    
=   \lambda \int^{|y-w|}_{|y|}  J_0'(\lambda s)\,ds.
\end{equation}
As a result, we  make use the following oscillatory integral estimate, whose proof  we postpone to Section~\ref{sec:ests}.
\begin{lemma} \label{lem:J's}
	For fixed constants $r,s>0$, we have the bound
\begin{equation*}
\Big|\int_0^\infty (H_0^-)'(\lambda r) J_0'(\lambda s) \lambda^{3 }h^-(\lambda) \chi(\lambda)\,d\lambda \Big| 
\les k(r,s),
\end{equation*}
where 
$$
k(r,s):= \frac{1 }{ \sqrt{rs}  \la r - s\ra^2}+\frac1{r\la r+ s\ra^{2+}}.
$$
\end{lemma}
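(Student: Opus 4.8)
The plan is to treat this as a one-dimensional oscillatory integral in $\lambda$ and to extract the two pieces of $k(r,s)$ from the oscillatory and non-oscillatory regimes of the two Bessel factors. Recall that for small argument $J_0'(z)=-J_1(z)=O(z)$, whereas $(H_0^-)'(z)=-H_1^-(z)=-J_1(z)+iY_1(z)$ has a \emph{non-oscillatory} singularity $(H_0^-)'(z)\sim -\tfrac{2i}{\pi z}$, while for large argument both factors admit the asymptotics $(H_0^-)'(z)=e^{-iz}\omega_-(z)$ and $J_0'(z)=\sum_{\pm}e^{\pm iz}\omega_\pm(z)$ with symbols obeying $|\partial_z^k\omega_\pm(z)|\les(1+z)^{-\frac12-k}$. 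Since $\chi$ restricts $\lambda$ to $(0,2\lambda_1)$ with $\lambda_1\ll1$, I would insert a partition of unity $\widetilde\chi(\lambda r)+(1-\widetilde\chi(\lambda r))$ and likewise in $\lambda s$, splitting the integral into four regimes according to whether each of $\lambda r,\lambda s$ is $\les1$ or $\gtrsim1$. The two structural facts driving everything are that $h^-(\lambda)=a\log\lambda+O(1)$ contributes only an integrable logarithm, and that differentiating an amplitude preserves its size up to a factor $\lambda^{-1}$ (because $\partial_\lambda\omega_\pm(\lambda r)=r\omega_\pm'(\lambda r)$ and $|r\omega_\pm'(\lambda r)|\les\lambda^{-1}(\lambda r)^{-\frac12}$, and $(h^-)'(\lambda)=a/\lambda$), so integration by parts against a phase $e^{i\lambda\theta}$ gains $|\theta|^{-1}$ at each step without destroying $\lambda$-integrability.

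In the both-large regime ($\lambda r,\lambda s\gtrsim1$) the product is a sum of oscillatory terms with phases $e^{i\lambda(s-r)}$ and $e^{-i\lambda(r+s)}$ and common amplitude of size $(\lambda r)^{-\frac12}(\lambda s)^{-\frac12}\lambda^3|h^-(\lambda)|\les(rs)^{-\frac12}\lambda^2|\log\lambda|$, whose $\lambda$-integral over $(\max(r^{-1},s^{-1}),2\lambda_1)$ is $O(1)$. For $|s-r|\les1$ I bound directly; for $|s-r|\gtrsim1$ I integrate by parts twice, each step yielding $|s-r|^{-1}$ and a comparably sized amplitude, producing $(rs)^{-\frac12}\la s-r\ra^{-2}$. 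The $e^{-i\lambda(r+s)}$ term is handled identically and produces $(rs)^{-\frac12}\la r+s\ra^{-2}$, which is dominated by $(rs)^{-\frac12}\la r-s\ra^{-2}$ since $r+s\ge|r-s|$. Thus this regime contributes exactly the first term of $k(r,s)$.

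The second term comes from the mixed and small–small regimes, where precisely the \emph{absence} of $\tfrac1{\sqrt{rs}}$-decay forces the $\tfrac1r$-weight. When $\lambda r\les1$ but $\lambda s\gtrsim1$ (which requires $s\gtrsim r$), the factor $(H_0^-)'(\lambda r)\sim(\lambda r)^{-1}$ is non-oscillatory and supplies the $\tfrac1r$, while the oscillation comes only from $J_0'(\lambda s)$; two integrations by parts against $e^{\pm i\lambda s}$ gain $\la s\ra^{-2}\asymp\la r+s\ra^{-2}$, and the amplitude $(\lambda s)^{-\frac12}$ leaves an integrable power of $\lambda$, giving $\les\frac{1}{rs^{5/2}}$. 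The symmetric regime ($\lambda r\gtrsim1$, $\lambda s\les1$, forcing $r\gtrsim s$) exploits the vanishing $|J_0'(\lambda s)|\les\lambda s$ together with the oscillation of $(H_0^-)'(\lambda r)$; integrating by parts sufficiently many times against $e^{-i\lambda r}$ yields decay $\les s\,r^{-9/2}$, comfortably dominated by $\frac{1}{r\la r+s\ra^{2+}}$ since $s\les r$. Finally, in the both-small regime there is no oscillation and the bounds $|(H_0^-)'(\lambda r)|\les(\lambda r)^{-1}$, $|J_0'(\lambda s)|\les\lambda s$ integrate directly to $\tfrac sr\,O(1)$, dominated by $k(r,s)$. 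The logarithmic factors from $h^-$ and from the small-argument expansion of $Y_1$ are absorbed at the cost of an arbitrarily small power of $\la r+s\ra$, which is what produces the exponent $2+$.

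The main obstacle is the bookkeeping of the integration-by-parts procedure across regimes. One must control the transition terms supported where $\lambda r\sim1$ or $\lambda s\sim1$: using smooth cutoffs $\widetilde\chi$, these appear as extra amplitude pieces $r\widetilde\chi'(\lambda r)=O(r)$ carried on a $\lambda$-set of length $O(r^{-1})$, hence contribute $O(1)$. One must also track $(h^-)'(\lambda)=a/\lambda$ so that it never worsens the $\lambda^{-1}$-per-derivative scaling, and then select, in each regime, the number of integrations by parts so that the residual $\lambda$-integral converges while the accumulated phase factors reproduce the claimed $\la r-s\ra^{-2}$ and $\la r+s\ra^{-2+}$ weights. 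The uniformity in $r,s$ hinges entirely on the observation that differentiating the symbols $\omega_\pm$ costs only $\lambda^{-1}$, so that every step is scale-invariant and the estimates assemble into the single bound $k(r,s)$.
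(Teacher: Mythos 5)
Your decomposition is essentially the paper's own: the paper writes $J_0'(z)=z\rho(z)+e^{iz}\omega(z)+e^{-iz}\omega(z)$ and $(H_0^-)'(z)=\eta(z)+e^{-iz}\omega(z)$, with $\rho,\eta$ supported in $[0,1]$ and $\omega$ supported where $z\gtrsim 1$, so its four terms $A,B,C,D$ are exactly your small--small, large--large, and two mixed regimes. Your treatment of three of the four regimes agrees with the paper's, including the key point that in the large--large regime two integrations by parts suffice here (the extra factor $\lambda h^-(\lambda)$ relative to the companion lemma for $J_0''$ keeps the twice-differentiated amplitude $\lambda$-integrable, which is precisely why this lemma gets $\la r-s\ra^{-2}$ while the other one only gets $\la r-s\ra^{-(2-)}$ via a Lipschitz-interpolation device), and the bookkeeping rule that each derivative falling on $\omega(\lambda r)$, $\omega(\lambda s)$, $h^-(\lambda)$, or $\chi(\lambda)$ costs at most $\lambda^{-1}$.

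There is, however, a genuine error in your small--small regime. You bound the integrand by $(\lambda r)^{-1}\cdot \lambda s\cdot \lambda^3|h^-(\lambda)|$ and ``integrate directly to $\frac{s}{r}\,O(1)$,'' claiming this is dominated by $k(r,s)$. That domination is false: for $r=s=M\gg 1$ your bound is $\frac{s}{r}=1$, while $k(M,M)\approx M^{-1}$. What you dropped is that this regime is supported only on $\lambda\les \min(1,r^{-1},s^{-1})$, and the needed decay in $\la r+s\ra$ comes entirely from the shrinking of that $\lambda$-interval, not from the size of the integrand. Restoring it gives
$$
\frac{s}{r}\int_0^{\min(1,r^{-1},s^{-1})}\lambda^3|h^-(\lambda)|\,d\lambda \les \frac{s}{r}\,\min(1,r^{-1},s^{-1})^{4-},
$$
which is $\les \frac{1}{r s^{3-}}$ when $s\geq\max(r,1)$, $\les \frac{s}{r^{5-}}\les\frac{1}{r^{4-}}$ when $r\geq\max(s,1)$, and $\les\frac1r$ when $r,s\les 1$; in every case this is $\les \frac{1}{r\la r+s\ra^{2+}}$. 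This is exactly how the paper estimates its term $A$, namely $A\les \int_0^{\min(1,r^{-1},s^{-1})}\lambda r^{-1}\,d\lambda\les \frac{1}{r\la r+s\ra^{2}}$. With this correction (and the same support restriction kept in mind wherever you invoke non-oscillatory bounds), your argument closes and coincides with the paper's proof.
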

With this estimate, we can prove the main proposition of this section.

\begin{proof}[Proof of Proposition~\ref{prop:swave prop}]
Substituting \eqref{eqn:SubtractingZeroH} and \eqref{eqn:SubtractingZeroJs}
into~\eqref{eqn:sFirstTerm}, the result is
\begin{equation}\label{eqn:HprimeJprimes}
\int_0^\infty \iint_{\R^4}\int_{\la x\ra}^{|x-z|} \int^{|y-w|}_{|y|}  (H_0^{-})'(\lambda r)[v S_1D_1S_1v](z,w) J_0' (\lambda s) 
\lambda^{3 }h^-(\lambda) \chi(\lambda)\,ds dr dwdz d\lambda.
\end{equation}
Note that we can change the order of integration provided that  $|v(x)|\les \la x\ra^{-\frac32-}$.
Evaluating the $\lambda$ integral first using Lemma~\ref{lem:J's}
the resulting expression is bounded by
\begin{equation}\label{tempor}
\iint_{\R^4}\int_{\la x\ra}^{|x-z|} \int^{|y-w|}_{|y|}k(r,s) [v S_1D_1S_1v](z,w)
\,ds dr dwdz.
\end{equation}
Let  $T(z,w)=\la z\ra^{N}\la w\ra^{N} [v S_1D_1S_1v](z,w)$, with $N=2+$. Note that $|T|$ is integrable in $z$ and $w$ by the absolute boundedness of $S_1D_1S_1$ provided that $|v(x)|\les \la x\ra^{-3- }$, which suffices to ensure that  $\la \cdot \ra^{N}v \in L^2$.  
Interchanging the order of integration of 
$s$ and $w$ yields
\begin{multline*}
 |\eqref{tempor}|\les \int_{\R^2} \int_{\la x\ra}^{|x-z|}\int_0^{|y|} \int_{|y-w|<s} k(r,s) \la z\ra^{-N} \la w\ra^{ -N} |T(z,w)|  
 \,dw ds dr dz \\
+ \int_{\R^2} \int_{\la x\ra}^{|x-z|}\int_{|y|}^\infty \int_{|y-w|>s}  k(r,s) \la z\ra^{-N} \la w\ra^{ -N} |T(z,w)| 
\,dw ds dr dz \\
\leq \int_{\R^2} \int_{\la x\ra}^{|x-z|}\int_0^{\infty} \int_{|w| \geq |s-|y||}  k(r,s) \la z\ra^{-N} \la w\ra^{ -N} |T(z,w)| 
 \,dw ds dr dz \\
\les \int_{\R^2} \int_{\la x\ra}^{|x-z|}\int_0^{\infty} \int_{\R^2}  \frac{ k(r,s)  \la z\ra^{-N}   |T(z,w)|}{ \la s-|y|\ra^N} 
 \,dw ds dr dz.
\end{multline*}
Repeating the same argument with $z$ and $r$ integrals, noting that $r>\min(1,|x-z|)$, and selecting $0<\epsilon<\f12$, we obtain the bound
\begin{multline*}
 \int_0^1 \int_0^{\infty}  \iint_{\R^4}  \frac{ k(r,s) r^{\epsilon}(1 +|x-z|^{ -\epsilon}) |T(z,w)|}{  \la r-|x|\ra^N\la s-|y|\ra^N} 
 \,dz dw ds dr\\
+ \int_1^\infty \int_0^{\infty}  \iint_{\R^4}  \frac{k(r,s)  |T(z,w)|}{  \la r-|x|\ra^N\la s-|y|\ra^N} 
 \,dz dw ds dr.
\end{multline*}
Evaluating   the $w$ and $z$ integrals (noting that $\int (1+|x-z|^{ -\epsilon}) |T(z,w)| dz dw \les 1$), we obtain the bound
\be\label{kbound}
	|\eqref{tempor}| \les \int_0^\infty \int_0^{\infty}    \frac{ r^{\epsilon} k(r,s)  }{\la r \ra^\epsilon   \la r-|x|\ra^N\la s-|y|\ra^N} 
 \,ds dr.
\ee
Note that this is an admissible kernel since $N=2+$. Noting that
$$
	\int_{\R^2} \frac{1}{\la r-|x|\ra^{N}}\, dx \les \la r\ra,
$$
with the growth in $r$ 
due to the contribution on the annulus $r-1\leq |x|\leq r+1$ when $r$ is large.  For \eqref{kbound}, the $L^1_x$ integral is bounded by 
\begin{multline*}
 \int_0^\infty \int_0^{\infty}    \frac{ \la r\ra^{1-\epsilon}  r^{\epsilon} k(r,s)  }{    \la s-|y|\ra^N} 
 \,dr ds = \int_0^\infty \int_0^{\infty}  \Big(\frac{\la r\ra   }{ \sqrt{ r s}  \la r - s\ra^2}+\frac{\la r\ra^{1-\epsilon}  r^{\epsilon-1}}{ \la r+ s\ra^{2+}}\Big)  \frac{1    }{   \la s-|y|\ra^N} 
 \,dr ds 
 \\ 
 \les  \int_0^{\infty}   \frac{1+s^{-\frac12}}{   \la s-|y|\ra^N} 
 \,  ds\lesssim 1.
\end{multline*}
This bound is uniform in $y\in \R^2$.  
Similarly, the $L^1_y$ integral is bounded by
\begin{multline*}
 \int_0^\infty \int_0^{\infty}    \frac{ \la  s \ra r^{\epsilon} k(r,s) }{ \la r \ra^\epsilon  \la r-|x|\ra^N} 
 \,dr ds\\ 
 = \int_0^\infty \int_0^{\infty}  \Big(\frac{\la  s \ra }{ \sqrt{ rs}  \la r - s\ra^2}+\frac{\la  s \ra  }{ \la r \ra^\epsilon r^{1-\epsilon}\la r+ s\ra^{2+}}\Big)  \frac{1    }{   \la r-|x|\ra^N} 
 \,ds dr  \\ 
 \les  \int_0^{\infty}   \frac{1+r^{\epsilon-1}}{   \la r-|x|\ra^N} 
 \,  dr\lesssim 1.
\end{multline*}
This bound is uniform in $x\in \R^2$.  Hence, the kernel is admissible which finishes the proof.
\end{proof}

It is easy to see that argument above suffices to bound the $QD_0Q$ term  as well, which is slightly better behaved in the spectral variable $\lambda$.

\begin{corollary}\label{cor:QDQ}
	
	The operator defined by
	\begin{align}\label{eq:QDQ}
	Au=
	\frac{1}{\pi i} \int_0^\infty  R_0^-(\lambda^2)v QD_0Qv\big[R_0^+(\lambda^2)-R_0^-(\lambda^2) \big] \lambda \chi(\lambda) u \, d\lambda
	\end{align}
	may be extended to a bounded operator on $L^p(\R^2)$ for any $1\leq p\leq \infty$, provided that $v(x)\les \la x\ra^{-3-}$.
	
\end{corollary}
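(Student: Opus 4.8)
The plan is to follow the proof of Proposition~\ref{prop:swave prop} essentially verbatim, exploiting the two structural features that make the $QD_0Q$ term no harder than the s-wave term it replaces. First, the orthogonality used there came from $S_1v=0$ acting on each side of $S_1D_1S_1$; here the analogous identity is $Qv=0$. Indeed, since $P$ projects onto the span of $v$ we have $Pv=v$, whence $Qv=v-Pv=0$, and $QD_0Q$ carries a factor of $Q$ on both sides. Consequently $\int_{\R^2}[vQD_0Qv](z,w)\,dz=0$ and $\int_{\R^2}[vQD_0Qv](z,w)\,dw=0$, exactly as for $[vS_1D_1S_1v](z,w)$. Second, the $QD_0Q$ term in \eqref{eq:Minvlow2} appears without the factor $h^-(\lambda)$, so the spectral integrand is milder near $\lambda=0$ than in Proposition~\ref{prop:swave prop}.

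First I would write the kernel of \eqref{eq:QDQ}, using \eqref{vital} for the difference of resolvents, in the form analogous to \eqref{eqn:sFirstTerm}:
\[
\int_0^\infty \iint_{\R^4} H_0^-(\lambda|x-z|)\,[vQD_0Qv](z,w)\,J_0(\lambda|w-y|)\,\lambda\chi(\lambda)\,dw\,dz\,d\lambda.
\]
Invoking $Qv=0$ on each side, I would subtract the $z$-independent term $H_0^-(\lambda\la x\ra)$ and the $w$-independent term $J_0(\lambda|y|)$ as in \eqref{eqn:SubtractingZeroH} and \eqref{eqn:SubtractingZeroJs}, producing the double radial integral $\lambda\int_{\la x\ra}^{|x-z|}(H_0^-)'(\lambda r)\,dr$ paired with $\lambda\int_{|y|}^{|w-y|}J_0'(\lambda s)\,ds$.

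Next I would carry out the $\lambda$ integral. The relevant oscillatory integral is now
\[
\Big|\int_0^\infty (H_0^-)'(\lambda r)\,J_0'(\lambda s)\,\lambda^3\,\chi(\lambda)\,d\lambda\Big|,
\]
which is Lemma~\ref{lem:J's} with the factor $h^-(\lambda)$ deleted. Since $h^-(\lambda)=a\log\lambda+(\text{const})$ is merely a logarithmically growing weight that is integrable against the rest of the integrand near $\lambda=0$ and bounded on the remainder of $\operatorname{supp}\chi$, its removal can only improve the estimate, so the same proof yields the same bound $k(r,s)$ (with room to spare). From here the argument is identical to that of Proposition~\ref{prop:swave prop}: setting $T(z,w)=\la z\ra^N\la w\ra^N[vQD_0Qv](z,w)$ with $N=2+$, the absolute boundedness of $QD_0Q$ (noted after Definition~\ref{resondef}) together with $\la\cdot\ra^N v\in L^2$, guaranteed by $v(x)\les\la x\ra^{-3-}$, makes $|T|$ integrable; the interchanges of integration and the two weighted radial convolutions go through unchanged, producing an admissible kernel, which gives boundedness on $L^p(\R^2)$ for all $1\le p\le\infty$.

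The only genuine point to verify is that deleting $h^-(\lambda)$ from Lemma~\ref{lem:J's} leaves the bound $k(r,s)$ intact; this is the sole place where the two proofs diverge, and it is a routine rerun of the analysis of Section~\ref{sec:ests} with one fewer, harmless, logarithmic factor. I do not expect a real obstacle: the $QD_0Q$ term is strictly better behaved in $\lambda$ than the leading s-wave term, so every estimate in Proposition~\ref{prop:swave prop} applies a fortiori.
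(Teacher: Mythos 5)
Your proposal is correct and coincides with the paper's own treatment: the paper disposes of the $QD_0Q$ term in one line, observing that the argument of Proposition~\ref{prop:swave prop} applies verbatim since this term is ``slightly better behaved in the spectral variable $\lambda$,'' which is exactly your two observations ($Qv=0$ supplies the same cancellation as $S_1v=0$, and dropping the logarithmic factor $h^-(\lambda)$ only improves the oscillatory integral bound of Lemma~\ref{lem:J's}). Your verification that the absolute boundedness of $QD_0Q$ and the decay $v(x)\les\la x\ra^{-3-}$ are what the admissibility argument needs is also the same as in the paper.
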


\section{Eigenvalue Only}\label{sec:eval}

In this section we show that the kernel of the leading singular term in the resolvent expansion when there is only an eigenvalue at zero is pointwise bounded by a kernel which is bounded on $L^p(\R^2)$ for $1\leq p<\infty$.  As in the case of s-wave resonance, we choose to utilize the   orthogonality relationship $S_1v=0$ in an integral formulation of the Mean Value Theorem.  Noting the expansion for $M^\pm(\lambda)^{-1}$ in Lemma~\ref{M evalonlycor}, the most singular term involves a singularity of size $\lambda^{-2}$ as $\lambda \to 0$, while the remaining terms have analogous counterparts in the case of an s-wave resonance.  In this section we prove

\begin{prop}\label{prop:D3}
	
	The operator defined by
	\begin{align}\label{eq:D3}
	Au=
	\frac{1}{\pi i} \int_0^\infty  R_0^-(\lambda^2)v S_3D_3S_3v\big[R_0^+(\lambda^2)-R_0^-(\lambda^2) \big] \lambda^{-1} \chi(\lambda) u \, d\lambda
	\end{align}
	may be extended to a bounded operator on $L^p(\R^2)$ for any $1\leq p< \infty$, provided that $v(x)\les \la x\ra^{-4-}$.
	
\end{prop}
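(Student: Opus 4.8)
The plan is to run the argument of Proposition~\ref{prop:swave prop} with one extra order of cancellation. Using \eqref{eqn:R0bessel} and \eqref{vital}, the kernel of $A$ is, up to a constant,
\begin{equation*}
\int_0^\infty \iint_{\R^4} H_0^-(\lambda|x-z|)\,[v S_3 D_3 S_3 v](z,w)\,J_0(\lambda|w-y|)\,\lambda^{-1}\chi(\lambda)\,dw\,dz\,d\lambda.
\end{equation*}
The only structural change from \eqref{eqn:sFirstTerm} is that the spectral weight is now $\lambda^{-1}$ in place of $h^-(\lambda)\lambda$, so the integrand is two orders more singular at $\lambda=0$. The entire difficulty is to absorb these two extra orders into vanishing of the Bessel factors, which is exactly what the richer orthogonality of the eigenvalue projection supplies.

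Whereas in the s-wave case one only had $S_1 v = 0$, here $S_3$ inherits $S_3 v = 0$ and, because there is a genuine zero-energy eigenfunction, also annihilates the first moments: $S_3(x_j v) = 0$ for $j = 1,2$, which is the operator-level form of the identities $\int_{\R^2} V\psi\,dx = \int_{\R^2} x_j V\psi\,dx = 0$ recorded after Theorem~\ref{thm:main}. Consequently, integrating against $v S_3 D_3 S_3 v$ in either the $z$ or the $w$ slot annihilates \emph{any} affine function of that variable. First I would therefore subtract from $H_0^-(\lambda|x-z|)$ its first-order Taylor polynomial in $z$ — extending the zeroth-order subtraction \eqref{eqn:SubtractingZeroH} — and from $J_0(\lambda|w-y|)$ its first-order Taylor polynomial in $w$, generalizing \eqref{eqn:SubtractingZeroJs}, writing each remainder in the integral mean-value form. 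Each remainder now carries a second derivative, $(H_0^-)''(\lambda r)$ respectively $J_0''(\lambda s)$, hence two extra powers of $\lambda$, at the price of a quadratic spatial growth $|z|^2$ (resp.\ $|w|^2$). This quadratic growth is precisely why the decay hypothesis must be strengthened to $v(x)\les\la x\ra^{-4-}$: setting $T(z,w)=\la z\ra^{N}\la w\ra^{N}[v S_3 D_3 S_3 v](z,w)$ with $N=3+$, the operator $S_3 D_3 S_3$ being absolutely bounded keeps $|T|$ integrable once $\la\cdot\ra^{N} v\in L^2$, i.e.\ once $v\les\la\cdot\ra^{-4-}$.

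After interchanging orders of integration (justified by this decay), I would evaluate the $\lambda$-integral with a second-derivative analogue of Lemma~\ref{lem:J's}: a bound of the form
\begin{equation*}
\Big|\int_0^\infty (H_0^-)''(\lambda r)\, J_0''(\lambda s)\,\lambda^{3}\chi(\lambda)\,d\lambda\Big|\les \widetilde k(r,s),
\end{equation*}
established from the small- and large-argument expansions of the Hankel and Bessel functions exactly as in Section~\ref{sec:ests}. The loss of one derivative of smoothness and of the $h^-(\lambda)$ factor, compared with Lemma~\ref{lem:J's}, will leave $\widetilde k(r,s)$ with only a single power of $\la r-s\ra$ along the diagonal, while the first-moment cancellation contributes the additional far-field factor $\la r+s\ra^{-1}$. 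Feeding this into the same chain of spatial integrations as in the proof of Proposition~\ref{prop:swave prop} — the radial integrals localizing $r$ near $\la x\ra$ and $s$ near $|y|$, and the weights $\la z\ra^{-N}$, $\la w\ra^{-N}$ from $T$ converting into $\la r-|x|\ra^{-N}$, $\la s-|y|\ra^{-N}$ — I expect to bound the full kernel pointwise by the second kernel of Lemma~\ref{lem:Lp kernel}, namely a multiple of $\la x\ra^{-1+}\la|x|-|y|\ra^{-1}\la|x|+|y|\ra^{-1}$. By that lemma this yields $L^p(\R^2)$ boundedness precisely for $1\le p<\infty$, and the single power of $\la|x|-|y|\ra$ is exactly what obstructs the admissible ($p=\infty$) estimate available in the s-wave case.

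The main obstacle will be the second step. Unlike the purely radial, one-dimensional subtraction used for $S_1$, exploiting the \emph{first} moment requires subtracting a genuinely vectorial linear term, so the remainder is a multivariable second-order Taylor remainder whose Hessian contains both a principal $\lambda^2 (H_0^-)''$ piece and a milder $\lambda(H_0^-)'/|x-tz|$ piece along the interpolating segment. Organizing these so that the oscillatory estimate above still applies, and controlling the $1/|x-tz|$ singularity uniformly in the remaining $z$- and $t$-integrations, is the delicate bookkeeping; once the analogue of Lemma~\ref{lem:J's} is in hand, the remaining spatial estimates are routine adaptations of those in Section~\ref{sec:swave}.
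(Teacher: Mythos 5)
Your overall strategy---trading the $\lambda^{-1}$ singularity against the extra moment cancellation available for $S_3$, evaluating the $\lambda$-integral by a stationary-phase lemma, and landing on the kernels of Lemma~\ref{lem:Lp kernel}---is exactly the paper's strategy, and your identification of the orthogonality (vanishing zeroth \emph{and} first moments in both the $z$- and $w$-slots) is correct. However, the symmetric way you deploy the cancellation contains a genuine gap, concentrated on the Hankel side. First, your claim that the second-order remainder for $H_0^-(\lambda|x-z|)$ ``carries two extra powers of $\lambda$'' is false: unlike $J_0$, which is entire with $J_0''(z)=O(1)$ as $z\to 0$, the Hankel function satisfies $(H_0^-)''(z)\sim z^{-2}$ near zero, so $\lambda^2 (H_0^-)''(\lambda\rho)\sim \rho^{-2}$ carries no $\lambda$-smallness whatsoever. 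All integrability at $\lambda=0$ must come from the $J_0$ side (as in the paper, where the count is $\lambda^{-1}\cdot O(1)\cdot O(\lambda^2)$); your extra subtraction on the $z$-side buys nothing in the spectral variable and is pure overhead.

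Second, and fatally for the route as written: the two-dimensional Taylor expansion of $f(z)=H_0^-(\lambda|x-z|)$ about $z=0$ with integral remainder cannot be estimated in absolute value, which is what your subsequent steps (interchanging integrals, applying the oscillatory lemma pointwise) require. The Hessian of $f$ blows up like $|x-z'|^{-2}$ at $z'=x$, so whenever $z$ lies on or near the ray $\{cx:c\ge 1\}$, the segment $\{tz:0\le t\le 1\}$ passes through or near this singularity and $\int_0^1 |z|^2\,|\nabla^2 f(tz)|\,dt$ is of size $|z|/d$, with $d$ the distance from the segment to $x$; it is infinite at $d=0$, and the subsequent $z$-integration of $|z|/d$ against $|v\phi(z)|$ diverges logarithmically in the transverse variable no matter how fast $v$ decays. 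So the ``delicate bookkeeping'' you defer is not bookkeeping but a divergence. The paper's proof is engineered precisely to avoid this: it deploys the moments \emph{asymmetrically} and keeps every subtraction one-dimensional and radial. On the singular Hankel side only the zeroth moment is used, via \eqref{eqn:SubtractingZeroH}, a radial integral $\lambda\int_{\la x\ra}^{|x-z|}(H_0^-)'(\lambda r)\,dr$ whose integrand $\sim 1/r$ is harmless; the first moment is spent only on the entire function $J_0$, and even there not as a Hessian remainder but as the radial splitting \eqref{eqn:SubtractingZeroJ}, whose two pieces---$\lambda\big(|y-w|-|y|+\frac{w\cdot y}{|y|}\big)J_0'(\lambda|y|)$, controlled by $\big||y-w|-|y|+\frac{w\cdot y}{|y|}\big|\,|y|\les|w|^2$, and $\lambda^2\int_{|y-w|}^{|y|}(s-|y-w|)J_0''(\lambda s)\,ds$---are then handled by the two separate oscillatory bounds, Lemma~\ref{lem:J''} and Lemma~\ref{lem:J'}, yielding respectively the first and second kernels of Lemma~\ref{lem:Lp kernel}. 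To repair your argument, replace your $z$-side expansion by \eqref{eqn:SubtractingZeroH} and your $w$-side expansion by \eqref{eqn:SubtractingZeroJ}; note that you will then need two oscillatory lemmas, not one, since the $J_0'(\lambda|y|)$ piece cannot be subsumed into an $(H_0^-)''J_0''$ estimate.
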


Once again we take advantage of the fact that the projection operator $S_3\leq S_1\leq Q$ has the orthogonality property that $S_3v1=0$.   
In addition, since $D_3$ acts on the finite dimensional space $S_3L^2$, and using 
\eqref{eqn:R0bessel} and \eqref{vital}, there will be a finite number of terms to consider of the form
\begin{equation} \label{eqn:EvalueFirstTerm}
\int_0^\infty \iint_{R^4} H_0^{-}(\lambda|x-z|) v\phi(z) v\psi(w) J_0(\lambda |w-y|) \lambda^{-1}\chi(\lambda)\,dwdz\,d\lambda,
\end{equation}
where $\phi, \psi \in S_3L^2(\R^2)$.  See Section~5 of \cite{EG} for further details on the spectral subspaces of $L^2(\R^2)$ associated to the zero-energy obstructions.
The singularity $\lambda^{-1} H_0^{-}(\lambda|x-z|)$ is not integrable at the $\lambda = 0$ endpoint. Therefore, as in the proof of Proposition~\ref{prop:swave prop} we must  
take advantage of the cancellation condition $\int_{\R^2} v\psi(w)\,dw = 0$ just to show that the resulting operator kernel
is finite anywhere.  The additional cancellation condition $\int_{\R^2} w_j v\psi(w)\,dw = 0$, $j = 1,2$, will be needed
to show that it is bounded on $L^p(\R^2)$ for $1 \leq p < \infty$.
With these orthogonality conditions, we can replace $H_0^-(\lambda|x-z|)$ in~\eqref{eqn:EvalueFirstTerm}
with  \eqref{eqn:SubtractingZeroH} 
and replace $J_0(\lambda|y-w|)$ with
\begin{multline} \label{eqn:SubtractingZeroJ}
J_0(\lambda|y-w|) - J_0(\lambda|y|) +\lambda\frac{w\cdot y}{|y|}J_0'(\lambda|y|) \\
=  \lambda(|y-w| - |y| + {\scriptstyle \frac{w\cdot y}{|y|}})J_0'(\lambda|y|) +
\lambda^2 \int_{|y-w|}^{|y|} (s-|y-w|)J_0''(\lambda s)\,ds.
\end{multline}

We will make use of oscillatory integral estimates similar to Lemma~\ref{lem:J's}, whose proofs we also postpone to Section~\ref{sec:ests}.  
\begin{lemma} \label{lem:J''}
	For fixed constants $r,s>0$, we have the bound
\begin{equation*}
\Big|\int_0^\infty (H_0^-)'(\lambda r) J_0''(\lambda s) \lambda^2 \chi(\lambda)\,d\lambda \Big|
\les k_2(r,s),
\end{equation*}
where 
$$
k_2(r,s):=\frac{1}{\sqrt{rs}\la r-s\ra^{2-}}+ \frac{1  }{r \la r+s\ra^2}.
$$
\end{lemma}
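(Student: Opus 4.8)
The plan is to establish the bound by a stationary/non-stationary phase analysis that runs in close parallel to the (also postponed) proof of Lemma~\ref{lem:J's}; the only structural changes are the replacement of $J_0'$ by $J_0''$ and of the weight $\lambda^3 h^-(\lambda)$ by $\lambda^2$. First I would record the relevant behavior of the special functions (see \cite{AS,Sc2}). For arguments $z\gtrsim 1$ one has oscillatory representations
$$(H_0^-)'(z)=e^{-iz}\,\omega(z),\qquad J_0''(z)=e^{iz}a_+(z)+e^{-iz}a_-(z),$$
where $\omega$ and $a_\pm$ are symbols of order $-\f12$, meaning $|\omega^{(k)}(z)|+|a_\pm^{(k)}(z)|\les z^{-\f12-k}$ for every $k\ge0$. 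For $z\les1$ one has $(H_0^-)'(z)=O(z^{-1})$, with the singularity coming from the $Y_1$ part, and $J_0''(z)=-\f12+O(z^2)$. In particular the integrand is $O(\lambda)$ as $\lambda\to0$, so the $\lambda^2$ weight already renders the integral absolutely convergent at the origin and no cancellation is needed merely for finiteness.

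Next I would split the $\lambda$-support of $\chi$ according to whether the arguments $\lambda r$ and $\lambda s$ are $\les1$ or $\gtrsim1$, giving four regions. In the fully small region $\lambda r,\lambda s\les1$ the integrand is $O(\lambda/r)$ over an interval of length $\les\min(1/r,1/s)$, which is harmless. In the two mixed regions one factor is non-oscillatory and bounded while the other carries a single exponential $e^{\mp i\lambda r}$ or $e^{\mp i\lambda s}$; I would integrate by parts in $\lambda$ against that single phase, the derivatives landing on the $\lambda^2$ weight and on the symbol amplitude without worsening the net $\lambda$-decay, to produce the relevant power of $\la r\ra^{-1}$ or $\la s\ra^{-1}$ and feed into the final bound.

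The core of the argument is the doubly-oscillatory region $\lambda r,\lambda s\gtrsim1$. Inserting the representations above expresses the integrand as a finite sum of terms $\lambda^2\,\omega(\lambda r)\,a_\pm(\lambda s)\,e^{-i\lambda(r\mp s)}\chi(\lambda)$, whose amplitude is $O(\lambda^{-1}(rs)^{-1/2})$; integrating the absolute value directly gives the crude bound $(rs)^{-1/2}$, which already accounts for both terms of $k_2$ in the range $|r\mp s|\les1$. When $|r\mp s|\gtrsim1$ I would integrate by parts twice against the phase $e^{-i\lambda(r\mp s)}$, each step contributing a factor $|r\mp s|^{-1}$ while the $\lambda$-derivatives of the order $-\f12$ symbols are compensated by the accompanying factors of $r$ or $s$. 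The difference phase $r-s$ can degenerate and is governed by the resulting $\frac{(rs)^{-1/2}}{\la r-s\ra^{2}}$, which supplies the first term of $k_2$; the sum phase $r+s$ never degenerates, so a careful accounting of the integrations by parts there produces the second term $\frac{1}{r\la r+s\ra^2}$.

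The main obstacle is the bookkeeping at the transition scales $\lambda\sim1/r$ and $\lambda\sim1/s$, where one passes between the small- and large-argument regimes: the boundary terms generated by the integrations by parts in adjacent regions must be matched against one another (those at the smooth cut-off $\lambda\sim\lambda_1$ vanish), and one must verify that the $\lambda$-integrals of the differentiated amplitudes converge. It is precisely a borderline logarithmically divergent contribution at these scales that forces the $\epsilon$-loss recorded in the exponent $2-$ of the first term of $k_2$, whereas the sum-phase term retains the clean exponent $2$. Tracking these losses, and confirming that the extra derivative in $J_0''$ together with the lighter weight $\lambda^2$ reproduces exactly the stated $k_2$ rather than the $k$ of Lemma~\ref{lem:J's}, is where the real care is required.
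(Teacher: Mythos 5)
Your overall architecture---splitting $(H_0^-)'$ and $J_0''$ into small-argument and oscillatory pieces and integrating by parts against the resulting single and double phases---is the same as the paper's decomposition into the terms $A,B,C,D$ (done there with smooth amplitude classes $\eta,\rho,\omega$, which incidentally removes the boundary-term matching you worry about at the transition scales). But there is a genuine gap at the heart of the lemma: the difference-phase piece $\int_0^\infty e^{i\lambda(r-s)}\omega(\lambda r)\omega(\lambda s)\lambda^2\chi(\lambda)\,d\lambda$ with $|r-s|\gtrsim 1$. Your claim that the $\lambda$-derivatives of the order $-\f12$ symbols are ``compensated by the accompanying factors of $r$ or $s$'' is false: each $\lambda$-derivative of the amplitude costs a factor $\lambda^{-1}$ (e.g.\ $\partial_\lambda \omega(\lambda r)=r\omega'(\lambda r)=O(\lambda^{-1}(\lambda r)^{-1/2})$), so after two integrations by parts the integrand is $O\big(\frac{1}{\lambda\sqrt{rs}}\big)$ on the support $\lambda\gtrsim 1/\min(r,s)$, and the $\lambda$-integral produces $\frac{\log(\min(r,s))}{|r-s|^2\sqrt{rs}}$. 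That logarithm sits in the wrong variable: for $r=s+1$ with $s\to\infty$ it is unbounded while $\la r-s\ra^{0+}$ stays bounded, so the claimed estimate $\frac{1}{\sqrt{rs}\la r-s\ra^{2-}}$ does not follow. Your closing paragraph correctly senses a borderline logarithmic contribution and attributes the $2-$ to it, but supplies no mechanism for converting a $\log\min(r,s)$ loss into a small power of $\la r-s\ra$, and plain double integration by parts cannot do it.

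The paper's proof contains exactly the missing device: integrate by parts once, writing $B\les |r-s|^{-1}\big|\int_0^\infty e^{i\lambda(r-s)}f(\lambda)\,d\lambda\big|$ with $f=\partial_\lambda\big(\omega(\lambda r)\omega(\lambda s)\lambda^2\chi(\lambda)\big)=O((rs)^{-1/2})$, and then replace the second integration by parts with the finite-difference bound \eqref{lipbound}. The Mean Value Theorem gives $|f(\lambda+\pi/L)-f(\lambda)|\les \frac{1}{L\lambda\sqrt{rs}}$ (with $L=|r-s|$), and interpolating against the uniform bound $O((rs)^{-1/2})$ yields $\les \frac{1}{L^{1-}\lambda^{1-}\sqrt{rs}}$, which is integrable in $\lambda$ near zero; this ``$(1+\eps)$-fold'' integration by parts is precisely what places the loss on the power of $|r-s|$, giving $\frac{1}{\sqrt{rs}\,L^{2-}}$, rather than leaving a stray $\log\min(r,s)$. (An equivalent repair within your scheme: decompose dyadically in $\lambda$, use one integration by parts on blocks $\lambda\les L^{-1}$ and two on blocks $\lambda\gtrsim L^{-1}$; the block count then contributes only $\log L\les \la r-s\ra^{0+}$.) Two smaller slips: the doubly-oscillatory amplitude is $O(\lambda (rs)^{-1/2})$, not $O(\lambda^{-1}(rs)^{-1/2})$ (your crude bound $(rs)^{-1/2}$ is right only for the correct power); and in the mixed regions the non-oscillatory factor is not bounded but $O((\lambda r)^{-1})$ with derivatives again costing $\lambda^{-1}$, so the counting must be done as in the paper's $C$ and $D$ terms (two and three integrations by parts, yielding $\frac{1}{rs^2}$ and $\frac{1}{r^3}$, both dominated by $\frac{1}{r\la r+s\ra^2}$).
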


We also need another bound for the first derivative with one less power of $\lambda$.
\begin{lemma} \label{lem:J'}
	For fixed constants $r,s>0$, we have the bound
\begin{equation*}
\Big|\int_0^\infty (H_0^-)'(\lambda r) J_0'(\lambda s) \lambda \chi(\lambda)\,d\lambda\Big| 
\les \frac{s\la \log \la r\ra  \ra }{r \la r+s\ra \la r - s\ra}.
\end{equation*}
\end{lemma}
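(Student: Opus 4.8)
The plan is to reduce the claim to the asymptotics of the Bessel functions and then estimate the resulting oscillatory $\lambda$-integral by the same template used for Lemma~\ref{lem:J's} and Lemma~\ref{lem:J''}, the essential difference being that the single power $\lambda^1$ (rather than $\lambda^2$ or $\lambda^3$) permits fewer integrations by parts, which is exactly what accounts for the single powers of $\la r-s\ra$ and $\la r+s\ra$ and for the logarithmic loss. First I would write $(H_0^-)'(z)=-H_1^-(z)$ and $J_0'(z)=-J_1(z)$, so the integral is $\int_0^\infty H_1^-(\lambda r)J_1(\lambda s)\lambda\chi(\lambda)\,d\lambda$, and record the two regimes of each factor: for small argument $H_1^-(z)=\frac{2i}{\pi z}+O(z\log z)$ and $J_1(z)=\frac z2+O(z^3)$, while for large argument $H_1^-(z)=\frac{e^{-iz}}{\sqrt z}\,b(z)$ and $J_1(z)=\frac{1}{\sqrt z}\big(e^{iz}a_+(z)+e^{-iz}a_-(z)\big)$ with $b,a_\pm$ symbols of order zero, so that $|\partial^k b|,|\partial^k a_\pm|\les z^{-k}$. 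Since $\chi$ restricts to $0<\lambda<2\lambda_1$ with $\lambda_1\ll1$, a factor enters its oscillatory regime only when the corresponding variable $r$ or $s$ is $\gtrsim\lambda_1^{-1}$; the proof then splits according to the sizes of $\lambda r$ and $\lambda s$ relative to $1$.

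Next I would dispose of the non-oscillatory contributions. When $r\les1$ the argument $\lambda r$ is always small, so $H_1^-(\lambda r)\approx\frac{2i}{\pi\lambda r}$ and the integral collapses to $\frac{C}{r}\int_0^\infty J_1(\lambda s)\chi(\lambda)\,d\lambda$; using $J_1(\lambda s)\approx\lambda s/2$ for $s\les1$ and the oscillatory decay of $J_1$ for $s\gtrsim1$ gives exactly $\les\frac{s}{r\la r+s\ra\la r-s\ra}$, with the logarithm harmlessly $\approx1$. For $r\gtrsim1$ I would isolate the small-energy piece $\lambda\in(0,1/r)$, on which $H_1^-(\lambda r)\approx\frac{2i}{\pi\lambda r}$ and $\chi\equiv1$, reducing it to $\frac{C}{r}\int_0^{1/r}J_1(\lambda s)\,d\lambda=\frac{C}{rs}\big(1-J_0(s/r)\big)$ via the identity $\int_0^x J_1=1-J_0$. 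Estimating $1-J_0(s/r)$ by $(s/r)^2$ for $s\les r$ and by $1$ for $s\gtrsim r$ shows this piece is $\les\frac{s}{r\la r+s\ra\la r-s\ra}$ as well. This step also transparently produces the distinctive prefactor $s/r$: it comes from the singularity $H_1^-(\lambda r)\sim(\lambda r)^{-1}$ together with the vanishing $J_1(0)=0$ built into the cancellation $\int v\psi=0$.

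The main work is the regime in which both $\lambda r$ and $\lambda s$ are large, which forces $r,s\gtrsim\lambda_1^{-1}$. Inserting the large-argument forms, the integrand becomes $\frac{1}{\sqrt{rs}}\big(a_+ e^{-i\lambda(r-s)}+a_- e^{-i\lambda(r+s)}\big)\chi(\lambda)$ up to lower-order symbol terms, exhibiting a resonant phase $r-s$ and a non-resonant phase $r+s$. For the non-resonant phase, and for the resonant phase when $|r-s|\gtrsim r$ (so $s\ll r$), the phase derivative is bounded below by a constant multiple of $r$, and repeated integration by parts — each step gaining $\la r\pm s\ra^{-1}$ while the symbols lose only $O(\lambda^{-1})\les\min(r,s)\ll\la r\pm s\ra$ — yields decay beyond the target, the genuinely non-negligible contribution in the $s\ll r$ case being the small-energy term already handled above. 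The delicate case is the resonant phase with $1\les|r-s|\les r$, i.e.\ $r\approx s$: here I would integrate by parts once against $e^{-i\lambda(r-s)}$, gaining $\la r-s\ra^{-1}$, after which the derivative of the symbol contributes an amplitude of size $O(\lambda^{-1})$ whose integral over $\lambda\in(1/\min(r,s),2\lambda_1)$ is $\les\log\min(r,s)\les\la\log\la r\ra\ra$. This gives $\les\frac{\la\log\la r\ra\ra}{\sqrt{rs}\,\la r-s\ra}$, and since $r\approx s$ here one checks $\frac{1}{\sqrt{rs}}\les\frac{s}{r\la r+s\ra}$, so the bound closes at $\frac{s\la\log\la r\ra\ra}{r\la r+s\ra\la r-s\ra}$.

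I expect the main obstacle to be exactly this resonant regime and the bookkeeping surrounding it. The low weight $\lambda^1$ means that after a single integration by parts the logarithmic derivative growth $O(\lambda^{-1})$ of the Bessel symbols cancels any further gain, which is precisely why only first powers of $\la r\pm s\ra$ and an unavoidable $\log\la r\ra$ survive — one cannot imitate the two clean integrations by parts available for the $\lambda^2,\lambda^3$ weights in Lemmas~\ref{lem:J''} and~\ref{lem:J's}. A secondary technical point requiring care is the matching across the transition scales $\lambda\sim1/r$ and $\lambda\sim1/s$: employing smooth cutoffs there, rather than sharp ones, is necessary to avoid spurious boundary terms that would otherwise overcount the resonant contribution and appear to violate the claimed bound. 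Finally, because $r$ and $s$ enter asymmetrically — $r$ through the logarithmically singular Hankel factor and $s$ through the regular Bessel factor — the estimate must respect this asymmetry, which is the source of the $s/r$ and $\log\la r\ra$ appearing only on the $r$ side of the final kernel.
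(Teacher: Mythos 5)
Your overall strategy is the same as the paper's: split each Bessel/Hankel factor into a small-argument piece and a large-argument oscillatory piece, estimate the non-oscillatory products by absolute values, and handle the oscillatory products by stationary-phase/integration-by-parts arguments, with the resonant phase $r-s$ at $r\approx s$ yielding the single power $\la r-s\ra^{-1}$ and the logarithm. Your treatment of the doubly-oscillatory regime is correct and matches the paper's term $B_1$: off resonance, repeated (three) integrations by parts give $\les \frac{\min(r,s)^2}{\sqrt{rs}\,\max(r,s)^3}$, and at resonance one integration by parts plus the $\int \lambda^{-1}d\lambda \les \log r$ loss gives $\frac{\la \log\la r\ra\ra}{\sqrt{rs}\,\la r-s\ra}$. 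However, two of the four term types in this decomposition are not covered by what you wrote, and in both cases the step you propose would fail.

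First, the mixed regime $\lambda r\gtrsim 1$, $\lambda s\les 1$ (oscillatory Hankel factor against small-argument Bessel factor; the paper's term $sD$) never appears in your case analysis: you treat $\lambda<1/r$ exactly, then declare the ``main work'' to be the regime where both arguments are large, dismissing the rest with the claim that in the $s\ll r$ case the only non-negligible contribution is the small-energy piece. That dismissal is false. On $1/r<\lambda<1/s$ (nonempty when $1\ll s\ll \lambda_1 r$) the absolute value of the integrand is $\approx (\lambda r)^{-1/2}\cdot \lambda s\cdot\lambda$, whose integral is $\approx \frac{1}{\sqrt{r}\,s^{3/2}}$; for $s=\sqrt r$ this is $r^{-5/4}$, enormously larger than the target $\approx \frac{s\log r}{r^3}=\frac{\log r}{r^{5/2}}$. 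This piece is small only because of the oscillation $e^{-i\lambda r}$, which the paper exploits with three integrations by parts to get $\les s/r^3$. Second, you replace $H_1^-(\lambda r)$ by $\frac{2i}{\pi\lambda r}$ on $\lambda<1/r$ and never estimate the $O(\lambda r\log(\lambda r))$ correction. Your identity $\int_0^{1/r}J_1(\lambda s)\,d\lambda=\frac 1s\bigl(1-J_0(s/r)\bigr)$ handles the leading singularity exactly --- a nice touch, arguably cleaner than the paper --- but the correction, paired with $J_1(\lambda s)$ by absolute values over $\lambda\in(1/s,1/r)$, gives $\approx\frac{1}{r^{3/2}\sqrt s}$ when $s\gg r$, which is not $\les\frac1{rs}$; the same problem occurs for $r\les 1$, $s\gg 1$ if one uses only the envelope $|J_1(\lambda s)|\les(\lambda s)^{-1/2}$. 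Here too one must integrate by parts once in the phase $e^{\pm i\lambda s}$: this is precisely the paper's term $C_1$, whose symbol bounds $|\eta^{(\ell)}(z)|\les z^{-1-\ell}$ absorb all small-argument behavior of the Hankel factor, logarithms included. Both gaps are repairable with the integration-by-parts technology you already use elsewhere, but as written the proof does not close.
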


With these estimates, we are now ready to prove the main proposition of this section.

\begin{proof}[Proof of Proposition~\ref{prop:D3}]

When we substitute~\eqref{eqn:SubtractingZeroH} and the second term on the right side of~\eqref{eqn:SubtractingZeroJ}
into~\eqref{eqn:EvalueFirstTerm}, the result is
\begin{equation}\label{eqn:HprimeJdprime}
\int_0^\infty \iint_{\R^4}\int_{\la x\ra}^{|x-z|} \int_{|y-w|}^{|y|}(s-|y-w|) (H_0^{-})'(\lambda r) v\phi(z) v\psi(w) J_0''(\lambda s) 
\lambda^2 \chi(\lambda)\,ds dr dwdz d\lambda.
\end{equation}
This is now integrable at $\lambda = 0$, and so long as $|v\psi(w) |\les \la w\ra^{-4-}$ we may
change the order of integration to evaluate the $\lambda$ integral first.  It is also clear from the domain that $\big|s - |y-w|\big| \leq \big||y| - |y-w|\big| \leq |w|$. For similar reasons, $\big|s- |y|\big| \leq |w|$ as well. According to Lemma~\ref{lem:J''}
the resulting expression is bounded by
\begin{equation*}
\iint_{\R^4}\int_{\la x\ra}^{|x-z|} \int_{|y-w|}^{|y|} k_2(s,r)  |v\phi(z) v\psi(w)| |w|
\,ds dr dwdz.
\end{equation*}
Proceeding as in the proof of Proposition~\ref{prop:swave prop} (see the estimate \eqref{kbound} of \eqref{tempor}),  we bound the integral above by
\begin{multline}\label{tempor2}
\int_{0}^{\infty}\int_{0}^{\infty}  \frac{ r^{\epsilon} k_2(s,r)}{  \la r\ra^{\epsilon} \la s- |y|\ra^{2+ } \la r - |x|\ra^{3+ } } 
\,ds dr\\ \les \int_{0}^{\infty}\int_{0}^{\infty} \frac{1}{\sqrt{rs}\la r-s\ra^{2-}\la s- |y|\ra^{2+} \la r - |x|\ra^{3+} } \,ds dr\\ + \int_{0}^{\infty}\int_{0}^{\infty} \frac{1  }{r^{1-\epsilon}\la r\ra^{\epsilon} \la r+s\ra^2\la s- |y|\ra^{2+ } \la r - |x|\ra^{3+ } } \,ds dr,
\end{multline}
provided that $|v\psi(w) |\les \la w\ra^{-5-}$. We claim that the first integral in the right hand side of \eqref{tempor2} gives an admissible kernel. Indeed, as in the bound for \eqref{kbound}, the $L^1_y$ integral is bounded by
\begin{equation*}
 \int_{0}^{\infty}\int_{0}^{\infty}  \frac{\la s\ra }{    \sqrt{rs}\la r-s\ra^{2-}\la r - |x|\ra^{3+}  } 
\,ds dr 
\les \int_0^\infty \frac{1+r^{-\frac12}}{ \la r-|x|\ra^{3+}}\,dr
\les 1
\end{equation*}
uniformly in $x$.
Similarly, the $L^1_x$ integral is bounded uniformly in $y$.   

Using Lemma~\ref{lem:bracket decay} in the $s$ variable in the second integral on the right hand side of \eqref{tempor2} gives the bound
$$
\int_{0}^{\infty}  \frac{1  }{r^{1-\epsilon}\la r\ra^{\epsilon} \la r-|y|\ra^2  \la r - |x|\ra^{3+ } }  dr   
\les \int_{0}^{\infty}  \frac{1  }{r^{1-\epsilon}\la r\ra^{\epsilon} \la r-|x|\ra^{1+}\la |x|-|y|\ra^2 } dr.
$$
The last bound follows from $ \la r-|y|\ra  \la r - |x|\ra  \gtrsim\la|x|-|y|\ra$. Finally, using a simple variant of  Lemma~\ref{lem:bracket decay}, noting that the region where $r<1$ produces a better bound, we obtain the bound
$$
  \frac1{ \la  x \ra \la|x|-|y|\ra^2 }.
$$
This is a bounded operator in $L^p$ for $1\leq p<\infty$ by Lemma~\ref{lem:Lp kernel}. 

If we instead substitute the first term on the right side of~\eqref{eqn:SubtractingZeroJ} into the
same expression, the result is
\begin{equation*}
\int_0^\infty \iint_{\R^4}\int_{\la x\ra}^{|x-z|} (|y-w| - |y| + {\scriptstyle \frac{w\cdot y}{|y|}}) (H_0^{-})'(\lambda r) v\phi(z) v\psi(w) J_0'(\lambda |y|) 
\lambda \chi(\lambda)\, dr dwdz d\lambda.
\end{equation*}
Once again it is permissible to change the order of integration and evaluate the $d\lambda$ integral first.
Lemma~\ref{lem:J'} provides the bound
$$
\iint_{\R^4}\int_{\la x\ra}^{|x-z|} 
\frac{\la \log \la r\ra \ra (|y-w| - |y| + {\scriptstyle \frac{w\cdot y}{|y|}}) |y|}{r\la r+|y|\ra \la r-|y|\ra} v\phi(z) v\psi(w) \, dr dwdz.
$$
Note that $| |y-w| - |y| + {\scriptstyle \frac{w\cdot y}{|y|}}| |y| \les |w|^2$ by considering cases $|y|\les |w|$ and $|y|\gg |w|$ separately. Using this bound and proceeding as in the proof of Proposition~\ref{prop:swave prop} by changing the order of  $z$ and $r$ integrals, and then evaluating the integrals in $z$ and $w$, we obtain the bound
$$
\int_0^\infty \frac{\la \log \la r\ra \ra  }{\la r\ra^{\epsilon} r^{1-\epsilon}\la r+|y|\ra \la r-|y|\ra\la r-|x|\ra^{3+}}
\,  dr, 
$$
provided that $|v(x)|\les \la x\ra^{-5-}$. Using $\la r\pm|y|\ra \la r-|x|\ra\gtrsim \la |x|\pm |y|\ra $ and then using  a simple variant of Lemma~\ref{lem:bracket decay} yields the bound 
$$
\frac1{\la |x|-|y|\ra\la |x|+|y|\ra}\int_0^\infty \frac{\la \log \la r\ra \ra  }{\la r\ra^{\epsilon} r^{1-\epsilon} \la r-|x|\ra^{1+} }
\,  dr\les \frac1{\la x\ra^{1-} \la |x|-|y|\ra\la |x|+|y|\ra}.
$$
This makes it a bounded operator on
$L^p(\R^2)$, $1 \leq p < \infty$ by Lemma~\ref{lem:Lp kernel}
\end{proof}

\section{Proof of Theorem~\ref{thm:main}}\label{sec:thmpf}

In this section we combine the results of the previous sections to prove Theorem~\ref{thm:main}.  In addition to the results established in Sections~\ref{sec:res} and \ref{sec:eval}, we need to control the error term in our expansions.
The following lemma is a modification of Lemma~4.1 in \cite{Yaj2}.
\begin{lemma}\label{lem:errorbound}
	
	Assume that $\ell>1$ is fixed and $N(\lambda)$ is an operator that satisfies
	$$
	\left\| \frac{d^j}{d\lambda^j} N(\lambda)  \right\|_{L^2\to L^2} \les \lambda^{\ell-j}, \qquad j=0,1,2, \qquad 0<\lambda <2\lambda_1.
	$$
	Then the operator $A$ defined by
	\be\label{eqn:Aop defn}
	Au=-\frac{1}{\pi i} \int_0^\infty R_0^-(\lambda^2)v N(\lambda)v\big[R_0^+(\lambda^2)-R_0^-(\lambda^2) \big] \lambda \chi(\lambda) u \, d\lambda
	\ee
	may be extended to an operator bounded on $L^p(\R^2)$ for any $1\leq p\leq \infty$, provided that $v(x)\les \la x\ra^{-2-}$.
	
\end{lemma}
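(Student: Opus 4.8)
The plan is to show that the integral kernel $K(x,y)$ of $A$ is dominated by an \emph{admissible} kernel; since an admissible kernel is bounded on $L^p(\R^2)$ for every $1\le p\le\infty$, this delivers the full range at once. First I would use \eqref{eqn:R0bessel} and \eqref{vital} to write
\[
K(x,y)=c\int_0^\infty \big\langle \overline{v\,H_0^-(\lambda|x-\cdot|)},\, N(\lambda)\,v\,J_0(\lambda|\cdot-y|)\big\rangle_{L^2}\,\lambda\chi(\lambda)\,d\lambda,
\]
so that the only role of $N(\lambda)$ is through the $L^2$-pairing in the two inner variables. Because the hypotheses furnish nothing beyond the operator bounds $\|N^{(j)}(\lambda)\|_{L^2\to L^2}\les\lambda^{\ell-j}$, $j=0,1,2$, every estimate must ultimately be reduced to these.

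A first observation is that oscillation is indispensable. Estimating the pairing by Cauchy--Schwarz at each fixed $\lambda$ gives only $|K(x,y)|\les\int_0^{2\lambda_1}\lambda^{1+\ell}\|v H_0^-(\lambda|x-\cdot|)\|_{L^2}\|vJ_0(\lambda|\cdot-y|)\|_{L^2}\,d\lambda$; the amplitude decay of the Bessel functions only produces $\|vJ_0(\lambda|\cdot-y|)\|_{L^2}\les(\lambda\la y\ra)^{-1/2}$, whose square-root decay is not integrable in $y\in\R^2$. Thus, exactly as for the leading terms treated in Propositions~\ref{prop:swave prop} and \ref{prop:D3}, the cancellation in the $\lambda$-integral must be retained. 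The difference is that here there are no orthogonality or moment conditions to exploit; their role is played instead by the surplus smallness $\lambda^{1+\ell}$ with $\ell>1$ (already integrable at $\lambda=0$ with no mean value subtraction) together with the two controlled $\lambda$-derivatives of $N$.

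To extract the oscillation while keeping $N$ inside the pairing, I would replace the Bessel functions by superpositions of plane waves — writing $J_0(\lambda|w-y|)$ as an average of $e^{i\lambda\omega\cdot(w-y)}$ over $\omega\in S^1$, and expanding $H_0^-(\lambda|x-z|)$ through its large-argument oscillatory asymptotics. The point is that the spatial frequency then becomes \emph{separable}: the factors $e^{-i\lambda\theta\cdot z}$ and $e^{i\lambda\omega\cdot w}$ act as unitary modulations that may be moved to either side of $N(\lambda)$ without changing its $L^2\to L^2$ norm (and likewise for $N',N''$). What remains is a family of scalar oscillatory integrals in $\lambda$ with explicit phases $\lambda(\omega\cdot y-\theta\cdot x)$ and amplitudes built from $\lambda^{1+\ell}$, the Hankel/Bessel symbols, and $\|N^{(j)}\|$. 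These I would estimate as in Lemmas~\ref{lem:J's}--\ref{lem:J'}: two integrations by parts in $\lambda$ — now distributed between $N$ (producing $N',N''$, controlled by hypothesis) and the symbols — together with the stationary-phase analysis in the angular variables $\theta,\omega$ reproduce the admissible kernel $k(|x-z|,|w-y|)=\frac{1}{\sqrt{rs}\,\la r-s\ra^2}+\frac1{r\la r+s\ra^{2+}}$. Integrating the $v$-weights against $k$ in $z$ and $w$ then yields the admissible bound on $K$.

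The main obstacle is precisely the presence of $N(\lambda)$ between the two resolvents combined with the fact that only $L^2$-operator control is available: the phase that produces the spatial decay is genuinely operator-valued (coupled in $z$ and $w$), so the scalar Lemmas~\ref{lem:J's}--\ref{lem:J'} cannot be invoked verbatim. The delicate point inside the plane-wave reduction is the bookkeeping of spatial weights: each $\lambda$-derivative falling on a modulation factor costs one power of $\la z\ra$ or $\la w\ra$, and one must organize the argument — distributing the two orders of decay between the radial ($\lambda$) and angular integrations rather than taking two $\lambda$-derivatives on the same side — so that no more than one such weight accrues per variable. This keeps $\la\cdot\ra v\in L^2(\R^2)$, which is exactly the content of the hypothesis $v(x)\les\la x\ra^{-2-}$. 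The remaining regimes — the small-argument range of the Bessel functions and the resonant set $|x|\approx|y|$, where integration by parts gains nothing — are handled as in Section~\ref{sec:swave}, the surplus $\lambda$-power rendering them directly integrable.
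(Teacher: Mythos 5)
Your overall strategy is exactly the paper's: show the kernel is admissible, observe that fixed-$\lambda$ Cauchy--Schwarz only yields $(\la x\ra\la y\ra)^{-1/2}$, and therefore integrate by parts twice in $\lambda$, distributing derivatives between $N$ (controlled by hypothesis) and the Bessel factors, with the inner weights $\la z\ra$, $\la w\ra$ generated along the way paid for by $v\les \la \cdot\ra^{-2-}$. The gap is in your specific reduction. Once you replace $J_0(\lambda|w-y|)$ by its plane-wave average over $S^1$ and push the modulation $e^{i\lambda\omega\cdot w}$ into $N(\lambda)$, the only remaining $y$-dependence is the scalar phase $e^{-i\lambda\omega\cdot y}$: the amplitude decay $(\lambda|w-y|)^{-1/2}$ of $J_0$, which is what ultimately supplies the factor $\la y\ra^{-1/2}$ after Cauchy--Schwarz, has been averaged away. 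Since the hypothesis controls only $N'$ and $N''$, you can integrate by parts at most twice, yielding at best $\la x\ra^{-1/2}\la |x|+\omega\cdot y\ra^{-2}$. But the angular integral $\int_{S^1}\la|x|+\omega\cdot y\ra^{-2}\,d\omega$ is bounded below by $c/|y|$ whenever $|y|>|x|$ (writing $u=|x|+\omega\cdot y$, one has $d\omega = du/\sqrt{|y|^2-(u-|x|)^2}\geq du/|y|$ near the zero set of $u$), so the method produces a kernel bound of size $\la x\ra^{-1/2}|y|^{-1}$ on $\{|y|>2|x|\}$. Such a bound fails the admissibility/Schur test in both variables, so neither the claimed range $1\leq p\leq\infty$ nor even $p=1$ can be concluded this way. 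Relatedly, your final step is internally inconsistent: having moved all $(z,w)$-dependence into $N$ as modulations, no bound of the form $k(|x-z|,|w-y|)$ can emerge, and no such kernel could be ``integrated against the $v$-weights'' in any case, since $N(\lambda)$ has no pointwise kernel bounds --- a constraint you correctly state at the outset and then violate.

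The repair is to extract the oscillation without linearizing, which is what the paper does following Yajima \cite{Yaj2}: set $G^\pm_y(y_1)=e^{\mp i\lambda|y|}R_0^\pm(\lambda^2)(y_1,y)$, which by (3.4) of \cite{Yaj2} satisfies \eqref{eq:Gbounds}, namely $|\partial_\lambda^j G^\pm_y(y_1)|\les \la y_1\ra^j(\lambda|y_1-y|)^{-1/2}$. This makes the phase exactly scalar, $e^{-i\lambda(|x|\pm|y|)}$, while retaining the square-root amplitude decay in \emph{both} inner variables. The kernel becomes $\int_0^\infty e^{-i\lambda(|x|\pm|y|)}a^\pm_{x,y}(\lambda)\chi(\lambda)\,d\lambda$ with $a^\pm_{x,y}(\lambda)=\lambda\la N(\lambda)vG^\pm_y,vG^-_x\ra$, and your (correct) bookkeeping --- each derivative landing on a modulated factor costs one inner weight, absorbed by $v$, followed by Cauchy--Schwarz and the hypothesis on $\|N^{(j)}\|_{L^2\to L^2}$ --- gives $|\partial_\lambda^j a^\pm_{x,y}(\lambda)|\les \lambda^{\ell-j}(\la x\ra\la y\ra)^{-1/2}$ for $j\leq 2$. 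The trivial bound together with two integrations by parts (no boundary terms, and $\lambda^{\ell-2}$ integrable since $\ell>1$) then yields $|K(x,y)|\les \la|x|\mp|y|\ra^{-2}(\la x\ra\la y\ra)^{-1/2}$, which is admissible; no angular decomposition or stationary phase is needed.
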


In other words, if $\chi(\lambda)N(\lambda)=\widetilde O_2(\lambda^\ell)$ as an operator on $L^2$, then \eqref{eqn:Aop defn} is bounded on $1\leq p\leq \infty$.

\begin{proof}
	
	Define the functions
	\begin{align*}
	G^\pm_y(y_1)=e^{\mp i \lambda |y|} R_0^\pm (\lambda^2)(y_1,y).
	\end{align*}
	Then, noting (3.4) in \cite{Yaj2}, we have the bounds
	\begin{align}\label{eq:Gbounds}
	\left|\frac{\partial^j}{\partial \lambda^j} G^\pm_y(y_1)  \right|
	\les \frac{\la y_1 \ra^j}{\sqrt{\lambda |y_1-y|}}.
	\end{align}
	Then, the $\lambda$ integral in \eqref{eqn:Aop defn} may be written as
	\be \label{eqn:A2}
	\int_0^\infty e^{-i\lambda (|x|\pm |y|)} a_{x,y}^\pm (\lambda) \chi(\lambda)\, d\lambda
	\ee
	where
	$$
	a_{x,y}(\lambda):= \lambda \la N(\lambda)v G_y^\pm,v G_x^- \ra.
	$$
	By assumption on $N(\lambda)$ and \eqref{eq:Gbounds}, we have 
	$$
	|\partial_\lambda^j a_{x,y}^\pm(\lambda)| \les \frac{\lambda^{\ell-j}}{(\la x\ra \la y \ra)^{\f12}}.
	$$
	Here we used that decay of $v$ along with the fact that
	$$
	\left\| \frac{\la x_1\ra^{-1-}}{|x-x_1|^{\f12}} \right\|_{L^2_{x_1}}\les \frac{1}{\la x\ra^{\f12}}.
	$$
	This  yields the bound
	$$
	|\eqref{eqn:A2}|\les \frac{1}{(\la x\ra \la y \ra)^{\f12}} \int_0^{2\lambda_1} \lambda^\ell \, d\lambda \les \frac{1}{(\la x\ra \la y \ra)^{\f12}}.
	$$	
	Now, using the smallness of $a_{x,y}^\pm(\lambda)$ and $\partial_\lambda a_{x,y}^\pm(\lambda)$ as $\lambda \to 0$ and the support of the cutoff function, we may integrate by parts twice without boundary terms to see
	$$
	|\eqref{eqn:A2}|\les \frac{1}{(|x|\mp |y|)^2(\la x\ra \la y \ra)^{\f12}} \int_0^{2\lambda_1} \lambda^{\ell-2} \, d\lambda \les \frac{1}{(|x|\mp |y|)^2(\la x\ra \la y \ra)^{\f12}}.
	$$
	Since $\ell>1$, $\lambda^{\ell-2}$ is integrable in a neighborhood of zero.
	These two bounds yield that
	$$
	|\eqref{eqn:A2}|\les \frac{1}{\la |x|\mp |y|\ra^2(\la x\ra \la y \ra)^{\f12}}
	$$	
	which is an admissible kernel. 
\end{proof}

\begin{proof}[Proof of Theorem~\ref{thm:main}]
	
Due to the high-energy wave operator bounds established in Sections~2 and 3 of \cite{Yaj2}, we need only prove the $L^p$ boundedness for small energies.  
We prove Theorem~\ref{thm:main} Part~\ref{swave result}) first.  Recall the expansion of the low-energy contribution to the wave operator in \eqref{eq:Minvlow2}.  The leading order term involves the operator $-h_{\pm}(\lambda)S_1D_1S_1$ is shown to be bounded on the full range $1\leq p\leq \infty$ in Proposition~\ref{prop:swave prop}.  Similarly the contribution of the operator $QD_0Q$ is bounded on the full range as well due to Corollary~\ref{cor:QDQ}.  The remaining terms involving the operator $S$ are bounded on the range $1<p<\infty$ in Section~2.2 in \cite{JY2}.  Finally, the error term is bounded on the full range $1\leq p\leq \infty$ by
Lemma~\ref{lem:errorbound}.

For the Part~\ref{eval result}, we employ the low-energy expansion of the wave operator obtained by inserting the expansion of Lemma~\ref{M evalonlycor} into \eqref{eq:Minvlow}.  The majority of the terms, all except the leading $\lambda^{-2}$ term, can be bounded similar to their counterparts already bounded in Part~\ref{swave result}.  The contribution of the remaining most singular term was shown to extend to a bounded operator if $1\leq p<\infty$ in Proposition~\ref{prop:D3}.  Again, the error term may be controlled by Lemma~\ref{lem:errorbound}, completing the proof. 
\end{proof}

\begin{rmk}
	
	We note that these techniques would allow for a slight improvement in Theorem~1.2 of \cite{EG}.  In particular, one can obtain the $t^{-1}$ time decay rate as an operator from $L^{1,0+}\to L^{\infty, 0-}$ in the case of an eigenvalue only at zero.  This removes one power of spatial weight from both spaces.
	
\end{rmk}

Our analysis does not seem to be immediately applicable to showing $L^p$ boundedness when there is a p-wave resonance at zero.   In Section~4 of \cite{GGwaveop4}, the difficulties inherent in the four dimensional resonance are discussed in detail.  Due to the similarities to the two dimensional p-wave resonance, many of the technical issues that provide a challenge to our pointwise bound approach remain.  If the wave operators were $L^p$ bounded for any $p>2$, it would imply a polynomial time decay of size $|t|^{(2/p)-1}$ as an operator on $L^p\to L^{p'}$ due to the intertwining identity \eqref{eqn:intertwine}.  However, the dispersive estimate in \cite{EG} or the weighted $L^2$ estimate in \cite{Mur}, along with a detailed analysis as in \cite{JY4,EGG}, shows that the leading term in the dispersive bound can decay no faster than $(\log t)^{-1}$ for large $t$.  The even-dimensional resonances are not well understood, while in three dimensions recent work of Yajima, \cite{YajNew3}, shows that the wave operators are bounded if and only if $1<p<3$ in the presence of a threshold resonance in three spatial dimensions.

The endpoints of $p=1$ and $p=\infty$ provide a serious technical challenge.  Even in the case when zero is regular, the low energy expansion is only known to be bounded on $1<p<\infty$.  In particular, the contribution of the terms involving the finite-rank operator $S$ are uncontrolled at the endpoints.  The lack of projections orthogonal to $v$ do not allow one to use the cancellation that was vital to our results in Theorem~\ref{thm:main}. Heuristically, the $(\log \lambda)^{-1}$ behavior near $\lambda=0$ does not provide enough smallness to improve the decay rate for large $x$ or $y$ by more than $(\log |x|)^{-1}$ or $(\log |y|)^{-1}$, which is not enough to reach the endpoints by considering only pointwise bounds.

\section{Integral estimates}\label{sec:ests}

Finally in this section, we collect the technical integral estimates necessary for the proofs in Sections~\ref{sec:swave}, \ref{sec:eval}, and \ref{sec:thmpf}.
We first note the following result on the pointwise decay of zero energy eigenfunctions, which follows from the proof of Lemma~5.5 in \cite{EG}.

\begin{lemma}\label{lem:efn decay}
	
	If $\psi$ is a zero energy eigenfunction, that is if $\psi \in L^2(\R^2)$ with $H\psi=0$, then  $\psi\in L^\infty(\R^2)$.  Furthermore, we have the pointwise bound
	$|\psi(x)|\les \la x\ra^{-1-}$.
	
\end{lemma}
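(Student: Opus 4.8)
The plan is to convert the eigenvalue equation into an integral equation against the logarithmic kernel $G_0$ from \eqref{eq:G0} and then read off the decay from the vanishing moments that $L^2$-membership forces. First I would record the basic integrability: since $\psi\in L^2(\R^2)$ and $V$ is bounded with fast decay, the product $V\psi$ lies in $L^1\cap L^2$. Indeed $|V\psi|\le v\cdot(v|\psi|)$ with $v\in L^2$ (as $\beta>2$) and $v|\psi|\in L^2$, so Cauchy--Schwarz gives $V\psi\in L^1$, while boundedness of $V$ gives $V\psi\in L^2$. Elliptic regularity for $-\Delta\psi=-V\psi$ then yields $\psi\in H^2_{\rm loc}\subset C^0$, so $\psi$ is locally bounded. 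Using $-\Delta G_0=\delta$, I would write $\psi=-G_0(V\psi)+h$ with $h$ harmonic on $\R^2$. Because $V\psi\in L^1$ with good decay, the near-diagonal part of $G_0(V\psi)$ is controlled by local $L^2$ bounds on $\log$ and the far part grows at most like $\log\la x\ra$, so $|G_0(V\psi)(x)|\les \log\la x\ra$. The mean value property applied to $h=\psi+G_0(V\psi)$ on unit balls then gives $|h(x)|\les \|\psi\|_{L^2(B_1(x))}+\log\la x\ra\les \log\la x\ra$, and a harmonic function of sublinear growth is constant, so $h\equiv c$.

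With $\psi=-G_0(V\psi)+c$, the asymptotic expansion of the kernel gives $\psi(x)=\frac{\log|x|}{2\pi}\int_{\R^2}V\psi\,dy+c+o(1)$ as $|x|\to\infty$. Since $\log|x|$ is not square-integrable on $\R^2$, membership $\psi\in L^2$ forces $\int_{\R^2}V\psi\,dy=0$, and then $\psi\to c$ forces $c=0$; in particular $\psi$ is bounded, giving $\psi\in L^\infty(\R^2)$. Next I would extract the first moment condition the same way. Using $\int V\psi=0$ I can write $\psi(x)=\frac{1}{2\pi}\int_{\R^2}[\log|x-y|-\log|x|]V\psi(y)\,dy$, whose leading behavior is $-\frac{1}{2\pi|x|^2}\,x\cdot\int_{\R^2}yV\psi(y)\,dy+O(|x|^{-2})$. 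If $\int y_jV\psi\neq0$ this produces a term of exact size $|x|^{-1}$, which is again not in $L^2(\R^2)$; hence $\psi\in L^2$ forces $\int_{\R^2}y_jV\psi\,dy=0$ for $j=1,2$. (This reproduces the moment identities quoted in the introduction; alternatively they follow from the structure $\psi\in S_3L^2$ via $S_3\le S_1\le Q$, but the $L^2$-obstruction argument is self-contained.)

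Finally, with both moments vanishing I would Taylor expand one more order: $\psi(x)=\frac{1}{2\pi}\int_{\R^2}\big[\log|x-y|-\log|x|+\tfrac{x\cdot y}{|x|^2}\big]V\psi(y)\,dy$, since the subtracted terms integrate to zero. Splitting into $|y|<|x|/2$ and $|y|\ge|x|/2$, the inner region contributes $\les |x|^{-2}\int|y|^2|V\psi|\,dy$, finite because $|V\psi|\les\la y\ra^{-\beta}$ with $\beta>12>4$; the outer region is negligible for large $\beta$, each of its three pieces being $\ll|x|^{-2}$ using $|V\psi|\les\la y\ra^{-\beta}$ and the integrability of the logarithmic singularity. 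This yields $|\psi(x)|\les\la x\ra^{-2}\le\la x\ra^{-1-}$, proving the claimed bound. The main obstacle is the analytic bookkeeping of the two "$L^2$-forces-cancellation" steps: one must make the kernel expansions uniform and control the boundary region $|y|\sim|x|$ where the logarithmic difference is not small but $V\psi$ is tiny; once the large-$\beta$ decay of $V$ is exploited there, the rest is routine.
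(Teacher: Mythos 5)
Your proof is correct, and it is worth noting how it relates to the paper's treatment: the paper gives no argument at all here, simply deferring to the proof of Lemma~5.5 in \cite{EG}, where the representation $\psi=-G_0v\phi$ (with $\phi=Uv\psi\in S_3L^2$) and the moment conditions $\int V\psi=\int y_jV\psi=0$ are extracted from the Jensen--Nenciu/Fredholm structure of the projections $S_3\leq S_1\leq Q$, and the decay then follows from expanding the logarithmic kernel. Your route reaches the same final expansion step but is self-contained: you derive the integral equation $\psi=-G_0(V\psi)+c$ directly from the PDE via Weyl's lemma, the $O(\log\langle x\rangle)$ growth bound, and a Liouville argument for the harmonic part, and you obtain both vanishing moments by observing that the alternatives ($\log|x|$, a constant, or an $|x|^{-1}$ angular term) are incompatible with $\psi\in L^2(\R^2)$. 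What your approach buys is independence from the spectral machinery of Section~\ref{sec:res} (and indeed a stronger conclusion, $|\psi(x)|\les\la x\ra^{-2}$, under the standing decay $\beta>12$); what the citation to \cite{EG} buys is uniformity with the framework already needed elsewhere in the paper and validity under weaker decay hypotheses on $V$, which is why the lemma is stated with the more modest exponent $-1-$. Two cosmetic points: the boundedness $\psi\in L^\infty$ follows at once from $\psi\in L^2$, $\Delta\psi=V\psi\in L^2$, hence $\psi\in H^2(\R^2)\hookrightarrow L^\infty(\R^2)$, so your local-regularity detour is not needed; and in your parenthetical remark it is $\phi=Uv\psi$, not the eigenfunction $\psi$ itself, that lies in $S_3L^2$.
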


We use the following simple integral bound,
\begin{lemma}\label{lem:bracket decay}
	
	If $0<\alpha, \beta$, $\alpha, \beta \neq n$ and $\alpha+\beta >n$ then
	$$
	\int_{\R^n} \la x-x_1\ra^{-\alpha}\la x_1 \ra^{-\beta} \, dx_1 \les \la x\ra^{-\min (\alpha, \beta, \alpha+\beta-n)}.
	$$
	
\end{lemma}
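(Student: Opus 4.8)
The plan is to establish this as a standard weighted convolution bound by partitioning the domain of integration according to the relative sizes of $|x_1|$, $|x-x_1|$, and $|x|$, and controlling each piece separately. Writing $I(x)$ for the integral on the left, I would decompose $\R^n$ (up to a null set) as $\Omega_1\cup\Omega_2\cup\Omega_3$, where
$$
\Omega_1=\{|x_1|\le |x|/2\},\quad \Omega_2=\{|x-x_1|\le |x|/2\},\quad \Omega_3=\{|x_1|>|x|/2,\ |x-x_1|>|x|/2\}.
$$

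On $\Omega_1$ one has $|x-x_1|\ge |x|/2$, hence $\la x-x_1\ra\gtrsim\la x\ra$, so the contribution is $\les \la x\ra^{-\alpha}\int_{|x_1|\le |x|/2}\la x_1\ra^{-\beta}\,dx_1$. Since $\beta\ne n$, the remaining integral is $\les 1$ when $\beta>n$ and $\les\la x\ra^{n-\beta}$ when $\beta<n$, so in either case the $\Omega_1$ piece is bounded by $\la x\ra^{-\min(\alpha,\alpha+\beta-n)}$. The symmetry $x_1\mapsto x-x_1$ interchanges the roles of $\alpha$ and $\beta$, so the $\Omega_2$ piece is bounded by $\la x\ra^{-\min(\beta,\alpha+\beta-n)}$ with no further work.

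The region $\Omega_3$ is where the real work lies, since there both $\la x_1\ra\gtrsim\la x\ra$ and $\la x-x_1\ra\gtrsim\la x\ra$ and neither factor alone localizes $x_1$. I would split $\Omega_3$ into a far piece $\{|x_1|>2|x|\}$ and a bounded annular shell $\{|x|/2<|x_1|\le 2|x|\}$. On the far piece $\la x_1\ra\approx\la x-x_1\ra\approx |x_1|$, so the integrand is $\les |x_1|^{-\alpha-\beta}$ and integrating over $|x_1|>2|x|$ gives $\la x\ra^{n-\alpha-\beta}$, convergent precisely because $\alpha+\beta>n$. On the annular shell $\la x_1\ra\approx\la x\ra$, so I would factor out $\la x\ra^{-\beta}$ and substitute $u=x-x_1$; there $|x|/2<|u|\le 3|x|$, so $\int_{|x|/2<|u|\le 3|x|}\la u\ra^{-\alpha}\,du\les\la x\ra^{n-\alpha}$ (using $\alpha\ne n$, and noting the shell is bounded so no divergence occurs even when $\alpha<n$), yielding again $\la x\ra^{-(\alpha+\beta-n)}$. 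Hence the $\Omega_3$ contribution is $\les\la x\ra^{-(\alpha+\beta-n)}$.

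Summing the three contributions and taking the worst exponent gives $I(x)\les\la x\ra^{-\min(\alpha,\beta,\alpha+\beta-n)}$, as claimed. The main obstacle, such as it is, is the region $\Omega_3$: here one must exploit the \emph{combined} decay $\alpha+\beta>n$ of the two weights rather than either individually, and the hypotheses $\alpha,\beta\ne n$ are exactly what rule out the borderline logarithmic divergences in the radial integrals appearing in $\Omega_1$, $\Omega_2$, and the annular part of $\Omega_3$.
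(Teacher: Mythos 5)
Your argument is correct, and there is nothing in the paper to compare it against: the authors state this lemma as a ``simple integral bound'' and use it without proof, treating it as standard. Your decomposition into $\{|x_1|\le |x|/2\}$, $\{|x-x_1|\le |x|/2\}$, and the complementary region (split further at $|x_1|=2|x|$) is exactly the standard argument for this convolution-type estimate, and each case is handled correctly, including the role of the hypotheses $\alpha,\beta\neq n$ and $\alpha+\beta>n$. One small slip worth repairing: on the far piece you assert $\la x_1\ra\approx\la x-x_1\ra\approx |x_1|$, which fails for small $|x_1|$, and integrating the resulting bound $|x_1|^{-\alpha-\beta}$ over $|x_1|>2|x|$ yields $|x|^{n-\alpha-\beta}$, which blows up as $|x|\to 0$ rather than giving $\la x\ra^{n-\alpha-\beta}$. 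The fix is immediate: the integrand there is in fact $\les \la x_1\ra^{-\alpha-\beta}$ (Japanese brackets throughout), and $\int_{|x_1|>2|x|}\la x_1\ra^{-\alpha-\beta}\,dx_1\les \la x\ra^{n-\alpha-\beta}$ since $\alpha+\beta>n$; alternatively, observe that for $|x|\les 1$ the full integral is trivially $O(1)$, so only $|x|\gtrsim 1$ needs the case analysis, where your estimates hold verbatim. This is a cosmetic issue, not a gap in the method.
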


We now prove Lemma~\ref{lem:Lp kernel}.

\begin{proof}[Proof of Lemma~\ref{lem:Lp kernel}]
	
	We first note the integral kernel is admissible on the set  $|y|<2|x|$.  First, consider the subset when $|y|<\frac{1}{2}|x|$.  In this case $|x|-|y| \approx |x|$, so
	$$
		|K(x,y)|\les \frac{1}{\la x\ra^{3-\epsilon}}.
	$$
	Then, since $0<\epsilon<1$,
	\begin{multline*}
		\sup_{x} \int_{|y|<\frac{1}{2}|x|} |K(x,y)|\, dy+\sup_y \int_{|x|>2|y|} |K(x,y)|\, dx
		\les \sup_x \frac{1}{\la x\ra^{1-\epsilon}}+ \int_{\R^2}
		\frac{1}{\la x\ra^{3+\epsilon}}\, dx\les 1.
	\end{multline*}
	When $|x|\approx |y|$, if $|K(x,y)|\les \frac{1}{\la x\ra \la |x|-|y|\ra^2}$, after changing to polar co-ordinates, we have
	\begin{align*}
		\sup_{y} \int_{|x|\approx |y|} |K(x,y)| \, dx \les \sup_y \frac{1}{\la y\ra } \int_{|y|/2}^{2|y|} \frac{r}{\la r-|y|\ra^2}\, dr \les
		\sup_y \int_{\R} \frac{1}{\la r-|y|\ra^2}\, dr \les 1.
	\end{align*}
	By symmetry, an identical bound holds when $x$ and $y$ are reversed.  For the second pointwise bound, we have
	\begin{multline*}
	\sup_{y} \int_{|x|\approx |y|} |K(x,y)| \, dx \les \sup_y \frac{1}{\la y\ra^{2-\epsilon} } \int_{|y|/2}^{2|y|} \frac{r}{\la r-|y|\ra}\, dr\\ 
	\les
	\sup_y \frac{1}{\la y \ra^{1-\epsilon}} \int_{|y|/2}^{2|y|} \frac{1}{\la r-|y|\ra}\, dr
	\les \sup_y \frac{\la \log \la y\ra \ra}{\la y \ra^{1-\epsilon}}
	\les 1.
	\end{multline*}	
	Finally, we consider the region on which $|y|>2|x|$.  We first show that the operator is bounded when $p=1$.  In either case, we have that $|K(x,y)|\les \la x\ra^{-3+\epsilon}$.  Then, since this is in $L^1_x(\R^2)$ uniformly in $y$, $K$ is bounded on $L^1(\R^2)$.  This follows since if $|K(x,y)|\les k_1(x)k_2(y)$, then
	$$
		\|Ku\|_p=\bigg\| \int_{\R^2} K(x,y) u(y)\, dy \bigg\| \les \| k_1 \|_p \| k_2 \|_{p'} \|u\|_p.
	$$
	Thus $K$ is $L^p$-bounded provided one can control the norms of $k_1$ and $k_2$.
	
	Next, we show that $K$ is bounded on $L^p$ with $p$  arbitrarily large, but finite.  Equivalently, we take $p'>1$.  In either case, we may bound the kernel with $|K(x,y)|\les \la x\ra^{-1+\epsilon} \la y\ra^{-2}$.
	Since $\la x\ra^{-1+\epsilon} \in L^p(\R^2)$ for any $p>\frac{2}{1-\epsilon}$, we need only consider the integral for $y$.  Now, $\la y\ra^{-2} \in L^{p'}(\R^2)$ for any $p'>1$, or equivalently any $p<\infty$.  Under the assumptions, we have the $K$ is bounded on any $\frac{2}{1-\epsilon}<p<\infty$.  We note that the lower bound of $\frac{2}{1-\epsilon}$ is not sharp, however interpolation between these bounds suffices to prove the claimed range of $p$.

\end{proof}

We now prove lemmas \ref{lem:J''}, \ref{lem:J'}, and \ref{lem:J's}.

\begin{proof}[Proof of Lemma~\ref{lem:J''}]
We use the notation  $\omega(z)$ to denote any function satisfying
\be\label{omega}
|\omega^{(j)}(z)|\lesssim |z|^{-\frac12-j}\widetilde\chi(z),\,\,\,j=0,1,2,...,
\ee
and use the notation $\rho$ for functions supported on $[0,1]$ and satisfying
\be\label{rho}
|\rho^{(\ell)}(z)|\lesssim  1, \,\,\,\,\ell=0,1,2,... 
\ee
Recall that $J_0(z)$  and $H_0^-(z)=J_0(z)-iY_0(z)$ satisfy (see, e.g.,  \cite{EG})
$$
J_0^{\prime}(z)=z\rho(z)+e^{iz}\omega(z)+e^{-iz}\omega(z), 
$$
$$
J_0^{\prime\prime}(z)=\rho(z)+e^{iz}\omega(z)+e^{-iz}\omega(z), 
$$
$$
(H_0^-)^{\prime}(z)= \eta(z) +e^{-iz}\omega(z),
$$ 
where $\eta$ is supported on $[0,1]$ and  
\be\label{eta}
|\eta^{(\ell)}(z)|\lesssim z^{-1-\ell},\,\,\,\ell=0,1,2,...
\ee
Therefore the integral is
\begin{multline*}
\int_0^\infty \eta(\lambda r)\rho(\lambda s) \lambda^2 \chi(\lambda) d\lambda +\int_0^\infty   e^{i\lambda (r\pm s)} \omega(\lambda r) \omega(\lambda s)  \lambda^2 
\chi(\lambda)  d\lambda \\
+\int_0^\infty e^{\pm i\lambda s} \eta(\lambda r)  \omega(\lambda s) \lambda^2 
\chi(\lambda)  d\lambda + \int_0^\infty   e^{i\lambda r} \omega(\lambda r) \rho(\lambda s)  \lambda^2 
\chi(\lambda)  d\lambda\\=:A+B+C+D.
\end{multline*}
Using \eqref{rho} and \eqref{eta} we have
$$
A\les \int_0^{\min(1,r^{-1},s^{-1})} \lambda r^{-1} d\lambda \les \frac1{r \la r+s\ra^2}.
$$
Note that $C=0$ unless $s\gtrsim r $ and $s\gtrsim 1$, in which case  integrating  by parts twice using \eqref{eta} and \eqref{omega}, and noting that the effect of each derivative is division by $\lambda$, we obtain
$$
C\les \frac1{rs^{5/2}}\int_{s^{-1}}^1 \frac1{\lambda^{3/2}} d\lambda \les \frac1{rs^2} \les  \frac1{r \la r+s\ra^2}.
$$
Similarly, $D=0$ unless $r\gtrsim s$ and $r\gtrsim 1$. Three integration by parts give
$$
D\les \frac1{r^{7/2}}\int_{r^{-1}}^{1} \frac1{\lambda^{3/2}} d\lambda \les \frac1{r^3} \les  \frac1{r \la r+s\ra^2}.
$$
We consider the term $B$ only for the minus sign, the other case is easier. Note that $B=0$ unless $r,s\gtrsim 1$.  This term is easily seen to be bounded by $\frac1{\sqrt{rs}}$. In the case $|r-s| \ll 1$,  using \eqref{omega}, the integral is 
bounded by $\frac1{\sqrt{rs}}\les \frac1{\sqrt{rs}\la r-s\ra^{2-}}$. 
In the case $|r-s| \gtrsim 1$, integration by parts  yield
$$
B\les  \frac1{|r- s|} \Big|\int_0^\infty e^{i \lambda (r-s)} f(\lambda) d\lambda\Big|,$$
where
\be\label{fl}
f(\lambda)=\frac{\partial}{\partial \lambda}\big(\omega(\lambda r) \omega(\lambda s)  \lambda^2 
\chi(\lambda)\big)=O\big(\frac1{\sqrt{rs}}\big).  
\ee
The following bound is well known (for $L>1$)
\be\label{lipbound}
\Big|\int_0^\infty e^{\pm i \lambda L} f(\lambda) d\lambda\Big|\les 
\frac{\|f\|_{L^\infty}}{L}+ \int_0^\infty |f(\lambda+\frac{\pi}{L})-f(\lambda)| d\lambda.
\ee
Using the Mean Value Theorem in  \eqref{fl}, we have
$$
|f(\lambda+\frac{\pi}{L})-f(\lambda)|\les \frac{1}{L} \sup_{\rho\in(\lambda,\lambda+\pi/L)} \big| \frac{\partial^2}{\partial \rho^2}\big(\omega(\rho r) \omega(\rho s)  \rho^2 
\chi(\rho)\big|\les \frac{1}{L \lambda \sqrt{rs} }.
$$ 
Interpolating this with the bound in \eqref{fl}, we obtain
$$
|f(\lambda+\frac{\pi}{L})-f(\lambda)|\les \frac{1}{L^{1-} \lambda^{1-} \sqrt{rs}}
$$
Using this bound, \eqref{fl}, and \eqref{lipbound} for $B$, we have (for $L=|r-s|\gtrsim 1$)
$$B\les \frac1{|r- s|} \Big[\frac1{|r-s|\sqrt{rs}}+ \frac1{|r-s|^{1-} \sqrt{rs}}  \int_{0}^1 \frac{1}{\lambda^{1-}} d\lambda \Big]\les \frac{1}{\sqrt{rs}\la r-s\ra^{2-}}.
$$ 
\end{proof}
The proof of Lemma~\ref{lem:J's} is similar for the terms analogous to $A,C$, and $D$, for the $B$ term two integration by parts yield the bound instead of using \eqref{lipbound}.

\begin{proof}[Proof of Lemma~\ref{lem:J'}]
Using the notation of the proof of Lemma~\ref{lem:J''}, the integral is equal to
\begin{multline*}
sA+sD+   \int_0^\infty   e^{i\lambda (r\pm s)} \omega(\lambda r) \omega(\lambda s)  \lambda  
\chi(\lambda)  d\lambda + \int_0^\infty e^{\pm i\lambda s} \eta(\lambda r)  \omega(\lambda s) \lambda 
\chi(\lambda)  d\lambda \\=:sA+sD+B_1+C_1.
\end{multline*}
The bounds we obtained for $A, D$ above yield the required bound for these terms:
$$
sA+sD  \les \frac{s }{r \la r+s\ra^2}.
$$
The bound for $C_1$ is similar to the bound for $C$ above by integrating by parts only once.

Finally, in the cases $s\gg r$ or $r\gg s$, three integration by parts yield 
$$
B_1\les \frac{\min(r,s)^2}{\sqrt{rs}\max(r,s)^3}\les \frac{s }{r \la r+s\ra \la r - s\ra}.
$$
When $r\approx s$, an integration by parts yield
$$
B_1\les \frac{\log(r)}{ r \la r-s\ra}.
$$  
\end{proof}

\end{document}